\newtheorem{theorem}{Theorem}
\theoremstyle{remark}
\newtheorem{remark}{Remark}
\newtheorem*{remark*}{Remark}
\newtheorem{corollary}{Corollary}
\newcommand{\sech}{\ensuremath{\operatorname{sech}}}
\renewcommand{\Im}{\mathop{\rm Im}\nolimits}
\renewcommand{\Re}{\mathop{\rm Re}\nolimits}
\newcommand{\res}{\mathop{\rm res}\limits}
\newcommand{\ch}{\cosh}
\newcommand{\sh}{\sinh}
\renewcommand{\th}{\tanh}
\newcommand{\cth}{\coth}
\renewcommand{\ln}{\log}
\newcommand{\arctg}{\arctan}
\newcommand{\atan}{\arctan}
\newcommand{\be}{\begin{equation}}
\newcommand{\ee}{\end{equation}} 
\newcommand{\specialnumber}[1]{
\def\tagform@##1{\maketag@@@{(\ignorespaces##1\unskip\@@italiccorr#1)}}}
\begin{document}

\title{{Computing Stieltjes constants using complex integration}}

\author{Fredrik Johansson}
\address{LFANT -- INRIA -- IMB, Bordeaux, France}
\email{fredrik.johansson@gmail.com}
\date{}

\author{Iaroslav V.~Blagouchine}
\address{SeaTech, University of Toulon, France}
\email{iaroslav.blagouchine@univ-tln.fr}


\subjclass[2010]{Primary 11M35, 65D20; Secondary 65G20}

\date{XXX and, in revised form, XXX}

\keywords{Stieltjes constants, Hurwitz zeta function, Riemann zeta function, integral representation, complex integration, 
numerical integration, complexity, arbitrary-precision arithmetic, rigorous error bounds.}

\begin{abstract}
The generalized Stieltjes constants $\gamma_n(v)$ are, up to a simple scaling factor,
the Laurent series coefficients of the Hurwitz zeta function $\zeta(s,v)$ about its unique pole $s = 1$.
In this work, we devise an efficient algorithm to compute these constants
to arbitrary precision with rigorous error bounds,
for the first time achieving this with low complexity with respect to the order~$n$.
Our computations are based on an integral representation
with a hyperbolic kernel that decays exponentially fast.
The algorithm consists of locating an approximate steepest descent contour
and then evaluating the integral numerically in ball arithmetic
using the Petras algorithm with a Taylor expansion for bounds
near the saddle point. 
An implementation is provided in the Arb library.
We can, for example, compute $\gamma_n(1)$ to 1000 digits
in a minute for any $n$ up to $n=10^{100}$. We also provide other interesting 
integral representations for $\gamma_n(v)$, 
$\zeta(s)$, $\zeta(s,v)$, some polygamma functions and the Lerch transcendent.
\end{abstract}

\maketitle

\section{Introduction}

The Hurwitz zeta function $\zeta(s,v) = \sum_{k=0}^{\infty} (k+v)^{-s}$
is defined for all complex $v \ne 0, -1, -2, \ldots$ and
by analytic continuation for all complex~$s$
except for the point $s = 1$, at which it has a simple pole. The Laurent series in a neighborhood of this unique pole
is usually written as
\begin{equation}
\label{eq:defstieltjes}
\zeta(s,v)=\frac{1}{s-1}+\sum_{n=0}^\infty \frac{(-1)^n}{n!} \gamma_n(v) (s-1)^n\,, \qquad s\in\mathbb{C}\setminus\{1\}.
\end{equation}
The coefficients $\gamma_n(v)$ are known as the generalized Stieltjes constants.
The ordinary Stieltjes constants $\gamma_n = \gamma_n(1)$, appearing in the analogous 
expansion of the Riemann zeta function $\zeta(s) = \zeta(s,1)$, are also known as the generalized Euler constants and include
the Euler-Mascheroni constant
$\gamma_0 = \gamma = 0.5772156649\ldots$ as a special case.\footnotemark

This work presents an original method to compute $\gamma_n(v)$ rigorously
to arbitrary precision, with the property of remaining fast for arbitrarily large $n$.
Such an algorithm has never been published (even in the case
of $v = 1$), despite an extensive
literature dedicated to the Stieltjes constants.
At the heart of the method is Theorem~\ref{thm:integrals},
given below in Section~\ref{sect:integrals}, which provides
computationally viable
integral representations for $\zeta(s,v)$ and $\gamma_n(v)$.
In particular, for $n \in \mathbb{N}_{0}$ and $\Re(v) > \frac12$,
\begin{equation}
\label{eq:coshintegral}
\gamma_n(v) = -\frac{\pi}{\,2(n+1)\,}\!\int_{-\infty}^{+\infty} 
\!\frac{\,\ln^{n+1}\!\big(v-\frac12 + ix\big)\,}{\ch^2 \! \pi x}\,dx,
\end{equation}
which extends representation (5) from \cite{blagouchine2015theorem} to the generalized Stieltjes constants.
The above expression is similar to the Hermite formula
\begin{equation}
\label{eq:hermite}
\gamma_n(v) = \left(\frac{1}{2v}\!-\!\frac{\log v}{n+1}\right) \log^n \! v - i\!
\int_0^{\infty} \!\!\!\frac{dx}{e^{2\pi x}-1} 
\!\!\left\{\frac{\log^n(v\!-\!ix)}{v-ix}\!-\!\frac{\log^n(v\!+\!ix)}{v+ix} \right\},
\end{equation}
see e.g.~\cite[Eq.~13]{blagouchine2015theorem},
but more convenient to use for computations since the integrand with the hyperbolic kernel $\sech^2 \! \pi x$ 
does not possess a removable singularity at \mbox{$x = 0.$} Additionally, Section~\ref{sect:integrals} provides some other 
integral representations for $\zeta(s,v)$ and $\gamma_n(v)$, some of which may also be suitable for computations
(see, in particular, Corollary \ref{col1} and Remark \ref{y78v98j2}).

\footnotetext{Its generalized analog $\gamma_0(v)$
includes the digamma function $\Psi(v)$, namely $\gamma_0(v)=-\Psi(v)$, see e.g.~\cite[Eq.~(14)]{blagouchine2015theorem}.}

Section~\ref{sect:computation}
describes a robust numerical integration strategy for computing $\gamma_n(v)$.
A crucial step is to determine an approximate steepest descent contour
that avoids catastrophic oscillation for large $n$.
This is combined with validated integration to ensure
that the computation is accurate (indeed, yielding proven error bounds).
An open source implementation is available in the Arb library~\cite{Johansson2017arb}.
Section~\ref{sect:benchmark} contains benchmark results.

\subsection{Background}

The numbers $\gamma_n$ with $n \le 8$ were first computed to
nine decimal places by Jensen in 1887. Many authors have
followed up on this work using an array of techniques.
Fundamentally, any method to compute~$\zeta(s)$ or $\zeta(s,v)$ can be adapted to compute $\gamma_n$ or $\gamma_n(v)$
respectively by taking derivatives.
For example, as discussed by Gram \cite{gram1895}, Liang and Todd~\cite{liang1972stieltjes},
Jensen's calculations of $\gamma_n$ were based on the limit representation
\be\label{34f34ght4g}
\zeta(s) = \lim_{N \to \infty} \!\left\{\sum_{k=1}^N \frac{1}{k^s} - \frac{N^{1-s}}{1-s}\right\}\,,\qquad \Re(s)>0\,,
\ee
which follows from the Euler--Maclaurin summation formula, so that
$$\gamma_n = \lim_{N \rightarrow \infty}\!{\left\{\sum_{k=1}^N  \frac{\log^n k}{k} - \frac{\log^{n+1} N}{n+1}\right\}}
\,,\qquad n\in\mathbb{N}_0\,,$$
while Gram expressed $\gamma_n$ in terms of derivatives
of the Riemann $\xi$ function which he evaluated using integer zeta values.
Liang and Todd proposed computing~$\gamma_n$ either
via the Euler-Maclaurin summation formula for $\zeta(s)$,
or, as an alternative, via the application of Euler's series transformation
to the alternating zeta function.
Bohman and Fr\"{oberg} \cite{bohman1988stieltjes} later refined the limit formula technique.

Formula \eqref{eq:hermite} is a differentiated form of
Hermite's integral representation for $\zeta(s,v)$, which can
be interpreted as the Abel-Plana summation formula applied to
the series for $\zeta(s,v)$.
As discussed by Blagouchine~\cite{blagouchine2015theorem},
the formula \eqref{eq:hermite} has been rediscovered several times
in various forms (the $v = 1$ case should be credited to Jensen and Franel and dates back to the end of the XIXth century).
Ainsworth and Howell~\cite{ainsworth1985} rediscovered
the $v = 1$ case of \eqref{eq:hermite}
and were able to compute $\gamma_n$ up to $n = 2000$
using Gaussian quadrature.

Keiper~\cite{keiper1992power} proposed an algorithm
based on the approximate functional equation
for computing the Riemann $\xi$ function, which upon
differentiation yields derivatives of $\xi(s)$ as integrals
involving Jacobi theta functions. The Stieltjes constants
are then recovered by a power series transformation.

Kreminski~\cite{kreminski2003newton} used a version of the limit formula
combined with Newton-Cotes quadrature to estimate the resulting sums,
and computed accurate values of $\gamma_n(v)$ up to $n = 3200$
for $v = 1$ and up to $n = 600$ for various rational $v$.
More recently, Johansson~\cite{Johansson2014hurwitz}
combined the Euler-Maclaurin formula with fast power series arithmetic
for computing $\gamma_n(v)$,
proved rigorous error bounds for this method,
and performed the most extensive computation of Stieltjes constants to date resulting in
10000-digit values of~$\gamma_n$ for all $n \le 10^5$.

Even more recently, Adell and Lekuona~\cite{adell2017fast}
have used probability densities for
binomial processes to obtain new rapidly convergent series for $\gamma_n$
in terms of Bernoulli numbers.

The drawback of the previous methods
is that the complexity to compute~$\gamma_n$
is at least linear in~$n$.
In most cases, the complexity is actually at least quadratic in~$n$ since
the formulas tend to have a high degree of cancellation necessitating
use of $\Omega(n)$-digit arithmetic.
For the same reason, the space complexity is also usually quadratic in~$n$,
at least in the most efficient forms of the algorithms.
For numerical integration of~\eqref{eq:hermite}, the difficulty
for large $n$ lies in the oscillation of the integrand
which leads to slow
convergence and catastrophic cancellation.\footnote{The formula \eqref{eq:hermite}
has also been used by Johansson for a numerical implementation of Stieltjes constants
in the mpmath library~\cite{johansson2017mpmath},
but the algorithm as implemented in mpmath
loses accuracy for large $n$
(for example, $\gamma_{10^4} \approx -2.21 \cdot 10^{6883}$
but mpmath~1.0 computes $-1.25 \cdot 10^{6800}$).}

The fast Euler-Maclaurin method~\cite{Johansson2014hurwitz}
does allow computing $\gamma_0, \ldots, \gamma_n$
simultaneously to a precision of $p$ bits in $n^{2+o(1)}$ time if $p = \Theta(n)$,
which is quasi-optimal. However, this is not ideal if we only need $p = O(1)$
or a single $\gamma_n$.\footnote{It is of course also interesting
to consider the complexity of computing
a single $\gamma_n$ value to variable accuracy $p$.
For $n \ge 1$, the complexity is $\Omega(p^2)$
with all known methods (although the fast Euler-Maclaurin
method amortizes this to $p^{1+o(1)}$ per coefficient when computing $n = \Theta(p)$
values simultaneously).
The exception is $\gamma_0$ which
can be computed in time $p^{1+o(1)}$ by exploiting its role as a
hypergeometric connection constant \cite{brent1980some}.}

This leads to the question of whether we can compute $\gamma_n$
quickly for any~$n$; ideally, in time depending only polynomially on $\log(n)$.
If we assume that the accuracy goal $p$ is fixed, then any
asymptotic formula $\gamma_n \sim G(n)$ where $G(n)$ is an easily
computed function should do the job.

Various asymptotic estimates and bounds for the Stieltjes constants have
been published, going back at least to Briggs~\cite{briggs1955some}
and Berndt~\cite{berndt1972hurwitz},\footnote{For the more complete history, see \cite[Sect.~3.4]{blagouchine2016}.}
but the explicit computations by Kreminski and others showed
that these estimates were far from precise.

A breakthrough came in 1984, when Matsuoka succeeded in obtaining the first--order asymptotics for
the Stieltjes constants \cite{matsuoka1985}, \cite[p.~3]{eddinthese2013}.
Four years later he derived the complete asymptotic expansion 
\be\label{matsuokabest}
\begin{array}{rl}
\displaystyle\gamma_n\,\sim & \displaystyle\frac{n!\,e^{g(b)}}{\pi} \!\sum_{k=0}^N \frac{|h_{2k}| \,2^{k+\frac12}\,
\Gamma(k+\frac12)}{\Big(g''(b)^2+f''(b)^2\Big)^{\!\frac12k +\frac14}}\,\times \\[9mm]
&\displaystyle \quad
\times \cos\!\left[f(b)-\big(k+\tfrac12\big)\arctan\frac{f''(b)}{g''(b)}+\arctan\frac{v_{2k}}{u_{2k}} \right],\quad N=0,1,2,\ldots
\end{array}
\ee
where $h_{k}$, $v_{k}$ and $u_{k}$ are the sequences of numbers defined by 
\begin{eqnarray}
&& \sum_{k=0}^\infty h_k (y-b)^k \,=\, \exp\!\Big[\phi(a+iy) - \phi(a+ib)+\tfrac12\phi''(a+ib) (y-b)^2\Big]\,, \notag\\[1mm]
&& u_{k}\equiv\Re(h_k)		\,, \qquad v_{k}\equiv\Im(h_k)	\,,		 \notag
\end{eqnarray}
and $f(b)$ and $g(b)$ are the functions defined as
\begin{eqnarray}
&& g(y)\equiv\Re\phi(a+iy)		\,, \qquad f(y)\equiv\Im\phi(a+iy)	\,, \notag\\[2mm]
&& \phi(z)\,=\,-(n+1)\log z -z\log2\pi i + \log\Gamma(z) 	\,. \notag
\end{eqnarray}
The pair $a=\Re z$, $b=\Im z$, is the unique solution of the equation
\be\label{93udj2oihnd}
\frac{\,d\phi(z)\,}{dz} \,=\, -\frac{\,n+1\,}{z} - \log2\pi i   + \Psi(z)\,=\,0\,,
\ee
satisfying $0<\Im z<\Re z$ and $\sqrt{n}<\Re z<n$, 
where $\Gamma(z)$ and $\Psi(z)$ are the gamma and digamma 
functions respectively, see \cite[pp.~49--50]{matsuoka1989}.\footnote{Matsuoka's Lemma 1
may be written in our form \eqref{93udj2oihnd} if we notice that equations (2) and (3) \cite[p.~49]{matsuoka1989} actually represent 
one single equation in which real and imaginary parts were written separately with $z=x+iy$, and then recall that $\,\overline{z}/|z|^2=1/z$,
where $\overline{z}$ is the \mbox{complex conjugate of $z.$}} 
The Matsuoka expansion~\eqref{matsuokabest} accurately predicts the behavior of $\gamma_n$, but is very cumbersome to use.
In 2011 Knessl and Coffey~\cite{knessl2011effective} presented a simpler asymptotic
formula\footnote{If we  put $N=0$
in Matsuoka's expansion \eqref{matsuokabest}, we retrieve, after some calculations
and several approximations, the same result as Knessl and Coffey \eqref{eq:knessl}.}
\begin{equation}
\label{eq:knessl}
\gamma_n \sim \frac{B}{\sqrt{n}} e^{nA} \cos(an+b) 
\end{equation}
in terms of the slowly varying functions
$$A = \frac{1}{2} \log(\alpha^2+\beta^2) - \frac{\alpha}{\alpha^2+\beta^2}, \quad B = \frac{2 \sqrt{2\pi(\alpha^2+\beta^2)}}{\sqrt[4]{(\alpha+1)^2+\beta^2}},$$
$$a = \atan\frac{\beta}{\alpha}+ \frac{\beta}{\alpha^2+\beta^2}, \quad b = \atan\frac{\beta}{\alpha} - \frac{1}{2} \atan\frac{\beta}{\alpha+1},$$
where $\beta$ is the unique solution of 
\be\notag
2 \pi \exp(\beta \tan \beta) = \frac{n \cos \beta}{\beta}\,,\qquad \text{with }\quad 0 < \beta < \tfrac12\pi, \quad \alpha = \beta \tan\beta\,.
\ee
In \eqref{eq:knessl}, the ``$\sim$'' symbol signifies asymptotic
equality as long as the cosine factor is bounded away from zero.
The factor $B e^{nA} / \sqrt{n}$ captures the
overall growth rate of $\gamma_n$ while the cosine factor
explains the local oscillations (and semi-regular sign changes).

More recently, Fekih-Ahmed \cite{fekih2014new} has given an alternative
asymptotic formula with similar accuracy to \eqref{eq:knessl}.
Paris~\cite{paris2015asymptotic} has also generalized \eqref{eq:knessl}
to $\gamma_n(v)$ and extended the result to
an asymptotic series with higher order correction terms,
permitting the determination of several digits
for moderately large $n$.

The Matsuoka, Knessl-Coffey, Fekih-Ahmed and Paris formulas were
obtained using the standard asymptotic technique of applying saddle point
analysis to a suitable contour integral.
From a computational point of view, these formulas still have three
drawbacks. First, being asymptotic in nature,
they only provide a fixed level of accuracy
for a fixed~$n$, so a different method must be used
for small $n$ and high precision~$p$.
Second, the terms in Matsuoka's expansion and the high-order terms in Paris's expansion 
are quite complicated to compute.
Third, explicit error bounds are not currently available.

A~natural approach to construct an algorithm with the desired properties
is to take a similar integral representation and
perform numerical integration instead of
developing an asymptotic expansion symbolically.
The integral representations behind the previous asymptotic formulas
do not appear to be convenient for this purpose, since they
involve nonsmooth functions (periodic Bernoulli polynomials)
or require a summation over several integrals.
We therefore use integrals with exponentially decreasing kernels, as
in the previous computational work by Ainsworth and Howell~\cite{ainsworth1985},
but with the addition of saddle point analysis (which is necessary
to handle large $n$) and a rigorous treatment of error bounds.

\section{Integral representations}

We obtain the following formulas
in terms of elementary integrands that are
rapidly decaying and analytic on the path of integration.
Although restricted to $\Re(v) > \frac{1}{2}$, they
permit computation on the whole $(s,v)$ and $(n,v)$ domains through application
of the recurrence relations
\begin{equation}
\label{eq:recurrence}
\zeta(s,v) = \zeta(s,v+1) + \frac{1}{v^s}, \qquad \gamma_n(v) = \gamma_n(v+1) + \frac{\log^n \! v}{v}.
\end{equation}

\label{sect:integrals}

\begin{theorem}
\label{thm:integrals}
The Hurwitz zeta function $ \zeta(s,v)$
and the generalized Stieltjes constants $\gamma_n(v)$ may be represented by the following
integrals
\begin{eqnarray}
\zeta(s,v)  && =\, \frac{\pi}{\,2(s-1)\,}\!\int_{-\infty}^{+\infty} \!
\!\frac{\,\big(v-\frac12\pm ix\big)^{1-s}}{\ch^2 \! \pi x}\,dx \, \label{9823ydhdhs7}\\[1mm]
&& =\, \frac{\pi}{\,2(s-1)\,}\!\int_{0}^{\infty} \!
\frac{\big(v-\tfrac12 - ix\big)^{1-s} + \big(v-\tfrac12 + ix\big)^{1-s}} {\,\ch^2 \! \pi x\,}\, {dx} \,\\[1mm]
&& =\, \frac{\pi}{\,2(s-1)\,}\!\! \int_{0}^{\infty}\!\!
\! \frac{\, \cos\!\Big[(s-1)\arctg\tfrac{2x}{2v-1}\Big] \,}{\,\big(v^2 - v + \tfrac14+ x^2\big)^{\frac12(s-1)} \ch^2 \! \pi x \,}\,dx \label{erokvrekov}
\end{eqnarray}
and
\begin{eqnarray}\label{9834df2b3e}
\gamma_n(v) && =\, -\frac{\pi}{\,2(n+1)\,}\!\int_{-\infty}^{+\infty} 
\!\frac{\,\ln^{n+1}\!\big(v-\frac12\pm ix\big)\,}{\ch^2 \! \pi x}\,dx \,\\[1mm]
&& =\, -\frac{\pi}{\,2(n+1)\,}\!\int_{0}^{\infty} \! \label{9834df2b3e2}
\frac{\,\ln^{n+1}\!\big(v-\tfrac12 - ix\big) + \ln^{n+1}\!\big(v-\tfrac12 + ix\big) \,} {\,\ch^2 \! \pi x\,} \, dx
\end{eqnarray}
respectively. All formulas~hold for complex $v$ and $s$ such that
$\Re(v) > \frac12$ and $s\neq1$.\footnote{In these formulas ``$\pm$'' signifies that either sign can be taken.
Throughout this paper
when several ``$\pm$'' or ``$\mp$'' are encountered in the same formula, it signifies that either the upper signs 
are used everywhere or the lower signs are used everywhere (but not the mix of them).}
\end{theorem}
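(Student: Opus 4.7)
The plan is to establish \eqref{9823ydhdhs7} by contour integration and then deduce the remaining formulas as corollaries. For the principal identity with the $+ix$ sign, I would consider the meromorphic function
$$F(z) \;=\; \frac{\,(v-\tfrac12+iz)^{1-s}\,}{\cosh^2 \pi z}$$
on $\mathbb{C}$, noting that $\cosh\pi z$ has simple zeros at $z = i(k+\tfrac12)$, $k\in\mathbb{Z}$, so $F$ has double poles there. The hypothesis $\Re v>\tfrac12$ puts the branch point $z_0 = i(v-\tfrac12)$ of the power function strictly in the open upper half-plane, so I would take the principal branch with a cut running upward from $z_0$ and close the contour in the \emph{lower} half-plane. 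Because $1/\cosh^2\pi z$ decays like $e^{-2\pi|\Im z|}$ while $(v-\tfrac12+iz)^{1-s}$ grows only polynomially, the semicircular arc contributes nothing in the limit, and we pick up precisely the poles at $z_k = -i(k+\tfrac12)$ for $k\ge0$.

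The key calculation is the residue at the double pole $z_k$. Expanding $\cosh\pi z$ around $z_k$ using $\cosh\pi z_k=0$ and $\sinh\pi z_k = -i(-1)^k$ gives $\cosh^2\pi z = -\pi^2(z-z_k)^2 + O((z-z_k)^4)$; the absence of a cubic term means the residue of $F$ at $z_k$ reduces to a clean derivative,
$$\operatorname*{res}_{z=z_k} F(z) \;=\; \frac{1}{-\pi^2}\,\frac{d}{dz}(v-\tfrac12+iz)^{1-s}\bigg|_{z=z_k} \;=\; \frac{\,i(s-1)\,}{\pi^2}\,(v+k)^{-s},$$
where I used $v-\tfrac12+iz_k = v+k$. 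Summing over $k\ge0$ produces $\zeta(s,v)$; multiplying by $-2\pi i$ (the orientation of the lower-half closure is clockwise) yields \eqref{9823ydhdhs7} with the $+$ sign. The $-$ sign variant follows instantly from the substitution $x\mapsto-x$, which also shows that averaging the two variants gives \eqref{9834df2b3e2}'s analogue, namely \eqref{erokvrekov}. Formula \eqref{erokvrekov} then comes from writing $v-\tfrac12\pm ix = \sqrt{(v-\tfrac12)^2+x^2}\,e^{\pm i\arctan(2x/(2v-1))}$ and combining the two exponentials into a cosine.

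For the Stieltjes constants, I would multiply \eqref{9823ydhdhs7} by $s-1$, producing an identity between two functions holomorphic at $s=1$, and match Taylor coefficients in powers of $s-1$. Expanding $(v-\tfrac12+ix)^{1-s}=\sum_{k\ge0}\frac{(-1)^k(s-1)^k}{k!}\log^k(v-\tfrac12+ix)$ and using the Laurent development \eqref{eq:defstieltjes}, the constant term in $s-1$ recovers the normalization $\int_{-\infty}^{+\infty}\operatorname{sech}^2(\pi x)\,dx=2/\pi$, while the coefficient of $(s-1)^{n+1}$ gives exactly \eqref{9834df2b3e} after cancelling signs and factorials. The half-line version \eqref{9834df2b3e2} is then the parity-folded form, as for \eqref{erokvrekov}.

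The main obstacle is the analytic justification of the contour deformation: one must verify that the power function $(v-\tfrac12+iz)^{1-s}$ admits a single-valued holomorphic branch on the closed lower half-plane (which relies crucially on $\Re v>\tfrac12$), and one must bound it on the large semicircle uniformly in $s$, allowing the arc contribution to vanish. Swapping integration with the Taylor expansion in $s-1$ to extract \eqref{9834df2b3e} is a secondary technical point but follows from uniform convergence on compact $s$-neighbourhoods of $1$, since $\log(v-\tfrac12+ix)$ grows only like $\log|x|$ against the exponentially decaying $\operatorname{sech}^2\pi x$.
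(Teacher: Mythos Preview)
Your proposal is correct and follows essentially the same route as the paper: a residue computation for the double poles of $\sech^2\pi z$ against the power $(v-\tfrac12\pm iz)^{1-s}$, followed by Taylor expansion in $s-1$ to extract the Stieltjes constants. The only cosmetic difference is that the paper works with the $-iz$ variant and closes in the \emph{upper} half-plane, whereas you take the $+iz$ variant and close below; these are mirror images under $z\mapsto -z$.

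On the technical point you flag as the main obstacle, the paper's treatment is worth noting: it lets the semicircle radius $R\to\infty$ through \emph{integers} so that the contour never passes close to the poles on the imaginary axis (where $|\cosh(\pi i R)|=|\cos\pi R|$ would otherwise vanish), and it first establishes the identity for $\Re(s)>1$ before invoking analytic continuation to all $s\neq 1$. Your sketch implicitly relies on both devices but does not name them; otherwise the arguments coincide.
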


In order to prove the above formulas, we will use the contour integration method.\footnote{Note that since many formulas 
with the kernels decaying exponentially fast
were already obtained in the past by Legendre, Poisson, Binet, Malmsten, Jensen, Hermite, Lindel\"of and many others (see e.g. a formula
for the digamma function on p.~541 \cite{blagouchine2015theorem}, or \cite{blagouchine2014malmsten} or \cite{lindelof1905}),
it is possible that formulas similar 
or equivalent to those we derive in this section might appear in earlier sources of which we are not aware. 
In particular, after the publication of the second draft version of this work, 
we learnt that a formula equivalent to our \eqref{erokvrekov} appears in two books by
Srivastava and Choi, \cite[p.~92, Eq.~(23)]{srivastava_03} and \cite[p.~160, Eq.~(23)]{srivastava_04} respectively. In both sources
it appears without proof and without references to other sources.}

\begin{proof} 
Consider the following line integral taken along a contour $C$ consisting of the interval $[-R,+R]$ on the real axis
and a semicircle of the radius $R$ in the upper half-plane, denoted $C_R$,
\be\label{0923un}
\ointctrclockwise_{C} \frac{\,\big(a - iz\big)^{1-s} \,}{\,\ch^2 \! \pi z \,}\,dz\,=
\int_{-R}^{+R} \! \frac{\,\big(a - ix\big)^{1-s} \,}{\,\ch^2 \! \pi x \,}\,dx \, +  \int_{C_R} \!
\frac{\,(a - iz)^{1-s} \,}{\,\ch^2 \! \pi z \,}\,dz \,.
\ee
On the contour $C_R$ the last integral may be bounded as follows:
\begin{eqnarray}\notag
&& \left|\, \int_{C_R} \!
\frac{\,(a- iz)^{1-s} \,}{\,\ch^2 \! \pi z \,}\,dz \, \right|   \,=\, R
\left|\, \int_0^{\pi} \!
\frac{\,\big(a - iRe^{i\varphi}\big)^{1-s} e^{i\varphi}\,}{\,\ch^2 \! \big(\pi Re^{i\varphi}\big) \,}\,d\varphi \, \right|  \,\le\\[2mm]
&& \qquad \qquad\notag
\,\le \,R \!\! \max_{\varphi\in[0,\pi]} \!
\left| \big(a - iRe^{i\varphi}\big)^{1-s}  \right| \cdot I_R  \\[2mm]
&& \qquad \qquad \le\,  R \!\! \max_{\varphi\in[0,\pi]} \!
\Big[|a|^2 +2R \,(a_x\sin\varphi-a_y\cos\varphi) +R^2\Big]^{\frac12\Re{(1-s)}} e^{\pi|\Im(1-s)|}  \, I_R\label{98ydfn2k}
\end{eqnarray}
where we denoted $a_x\equiv\Re(a)$, $a_y\equiv\Im(a)$ and
\be\notag
I_R\,\equiv\int_0^{\pi} \!
\frac{\,d\varphi \,}{\,\big|\ch\! \big(\pi Re^{i\varphi}\big) \big|^2\,}\,,\qquad R>0\,,
\ee
for the purpose of brevity. It can be shown that as $R$ tends to infinity and remains integer the integral $I_R$ tends to zero 
as $1/R$. For this aim, we first remark that 
\be\notag
\frac{\,1\,}{\,\big|\ch\! \big(\pi Re^{i\varphi}\big) \big|^2\,}\,=\,\frac{2}{\,\ch(2\pi R\cos\varphi) + \cos(2\pi R\sin\varphi)\,}\,.
\ee
Since $R$ and $\varphi$ are both real, $\ch(2\pi R\cos\varphi) >1 $ 
except for the case when $\cos\varphi=0$. Hence
\be\notag
\ch(2\pi R\cos\varphi) + \cos(2\pi R\sin\varphi)>0\,,
\ee
except perhaps at $\varphi=\frac{1}{2}\pi$. But
at the latter point, since $R$ is integer,
\be\notag
\left. \ch(2\pi R\cos\varphi) + \cos(2\pi R\sin\varphi) \vphantom{\int}\right|_{\varphi=\frac{1}{2}\pi} \!\!\!\! =\,1+\cos(2\pi R)\,=\,2\,.
\ee Therefore $\big|\ch \! \big(\pi Re^{i\varphi}\big)\big|^{-2}$ remains always bounded for integer $R$
(see also Fig.~\ref{c3f38nc389y}),
\begin{figure}[!t]    
\centering 
\includegraphics[angle=-90,width=0.9\textwidth]{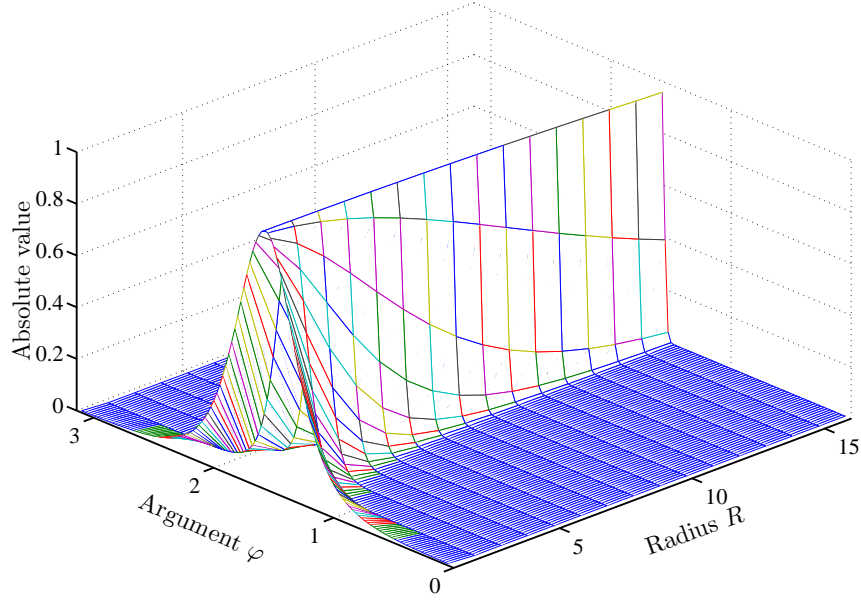} 
\caption{3D-plot of $\,\left|\ch \! \left(\pi R e^{i\varphi}\right)\right|^{-2}\,$
for $R\in[1,16]$ and $\varphi\in[0,\pi]$ clearly displays
the boundness of the latter ($R$ integer). Note also that at large $R$ the contribution 
of the point $\varphi=\frac{1}{2}\pi$ to the integral $I_R$ becomes
infinitely small (its height equals one, while the width tends to zero).} 
\label{c3f38nc389y} 
\end{figure}
and when $R\to\infty$ we have 
\be
\label{uyc429h4}
\frac{\,1\,}{\,\big|\ch\! \big(\pi Re^{i\varphi}\big) \big|^2\,} \, = \,
\begin{cases}
\,O\big(e^{-2\pi R\cos\varphi}\big ) \,, 	& \varphi\in\big[0,\frac{1}{2}\pi \big] \,,\\[4mm]
\, O\big(e^{+2\pi R\cos\varphi}\big ) \,, 	& \varphi\in\big[\frac{1}{2}\pi, \pi\big]\,.
\end{cases}
\ee 
Thus, accounting for the symmetry of $\,\big|\ch \! \big(\pi Re^{i\varphi}\big)\big|^{-2}\,$ about $\varphi=\frac{1}{2}\pi$,
we deduce that 
\begin{eqnarray}
\notag
I_R && =\int_0^\pi \!
\frac{\,2 \, d\varphi \,}{\,\ch(2\pi R\cos\varphi) + \cos(2\pi R\sin\varphi)\,}\\[2mm]
&& \notag
=\int_0^{\frac{\pi}{2}} \!
\frac{\,4\, d\varphi \,}{\,\ch(2\pi R\cos\varphi) + \cos(2\pi R\sin\varphi)\,}\\[2mm]
&&  \label{093ujfo3pjf}
= \, O\!\left(\!\int_0^{\frac{\pi}{2}} \! e^{-2\pi R\cos\varphi} \, d\varphi  \right) 
= \, O\!\left(\!\int_0^{\frac{\pi}{2}} \! e^{-2\pi R\sin\vartheta} \, d\vartheta  \right)\,,
\qquad R\to\infty\,.
\end{eqnarray}
From the inequality
\be\notag
\frac{2\vartheta}{\pi}\le\sin\vartheta\le\vartheta\,,\qquad \vartheta\in\big[0,\tfrac{1}{2}\pi\big]\,,
\ee
it follows that
\be\label{987y4gb7}
\frac{\,1-e^{-\pi^2 R}\,}{2\pi R}\le 
\int_{0}^{\;\frac{\pi}{2}} \!\! e^{-2\pi R\sin\vartheta} \, d\vartheta
\le\frac{1-e^{-2\pi R}}{4 R}\,,
\ee
and since $R$ is large, exponential terms on both sides may be neglected. Thus $I_R=O(1/R)$ at $R\to\infty$.\footnote{Another
way to obtain the same result is to recall that the integral \eqref{987y4gb7} may be evaluated in terms of the modified 
Bessel function $I_n(z)$ of the first kind and the modified Struve function $L_n(z)$. Using the asymptotic expansions of these special functions
we obtain even a more exact result, namely 
\be
\int_{0}^{\frac{\pi}{2}} \!\! e^{-2\pi R\sin\vartheta} \, d\vartheta \,
=\,\frac{\pi}{2}\Big\{I_0(2\pi R) - L_0(2\pi R)\Big\}\,
\sim \,\frac{1}{\,2\pi R\,}\,, \qquad R\to\infty\,,
\ee
i.e.~the integral asymptotically tends to the left bound \eqref{987y4gb7}.}
Inserting this result into \eqref{98ydfn2k}, we obtain 
\begin{eqnarray}\label{8914flehjce}
&& \left|\, \int_{C_R} \!
\frac{\,(a- iz)^{1-s} \,}{\,\ch^2 \! \pi z \,}\,dz \, \right|  \, 
\to0 \qquad\text{as} \quad R\to\infty\,, \, R\in\mathbb{N}\,,\quad
\end{eqnarray}
if $\Re(s)>1$. Hence, making $R\to\infty$, equality \eqref{0923un} becomes
\be\label{674hbu74u}
\int_{-\infty}^{+\infty} \! \frac{\,\big(a - ix\big)^{1-s} \,}{\,\ch^2 \! \pi x \,}\,dx \,=
\ointctrclockwise_{C} \frac{\,\big(a - iz\big)^{1-s} \,}{\,\ch^2 \! \pi z \,}\,dz\,
\ee
where the latter integral is taken around an infinitely large semicircle in the upper half-plane. 
The integrand is not a holomorphic function: it has the poles of the second order at $z=z_n\equiv i\left(n+\frac12\right)$,
$n\in\mathbb{N}_0$, due to the hyperbolic secant, and a branch point 
at $z=-ia$ due to the term in the numerator. If $\Re(a)>0$, the branch point lies outside the integration contour 
and we may use the Cauchy residue theorem:
\begin{eqnarray}\label{089343uy}
&& \ointctrclockwise_{C} \frac{\,\big(a - iz\big)^{1-s} \,}{\,\ch^2 \! \pi z \,}\,dz \, = \,2\pi i 
\!\sum_{n=0}^\infty \res_{z=z_n} \! \frac{\big(a - iz\big)^{1-s}}{\ch^2 \! \pi z}  \, =\\[2mm]
&& \quad \notag
= \,-\frac{2i}{\,\pi\,}
\sum_{n=0}^\infty \left.\frac{\partial }{\partial z} \big(a - iz\big)^{1-s}\right|_{z=i\left(n+\frac12\right)} \!= \\[2mm]
&& \quad \notag
=\, \frac{\,2(s-1)\,}{\,\pi\,} \sum_{n=0}^\infty \big(a + \tfrac12 + n\big)^{-s} \,
=\, \frac{\,2(s-1)\,}{\,\pi\,} \, \zeta\big(s,a+\tfrac12\big)\,.
\end{eqnarray}
Equating \eqref{674hbu74u} with the last result yields
\be
\label{ijh29h3dd2}
\zeta\big(s,a+\tfrac12\big) \,=\,\frac{\,\pi\,}{\,2(s-1)\,} \! \int_{-\infty}^{+\infty}
\! \frac{\,\big(a - ix\big)^{1-s} \,}{\,\ch^2 \! \pi x \,}\,dx \,,\qquad \Re(a) >0 \,.
\ee
Splitting the interval of integration in two parts $(-\infty,0]$ and $[0,+\infty]$ and recalling that 
\be
\big(a + ix\big)^{s} + \big(a - ix\big)^{s} \,=\,2 \big(a^2 + x^2\big)^{\frac{s}{2}}\cos\!\left(\!s\arctg\frac{x}{a}\right)
\ee
the latter expression may also be written as
\begin{eqnarray}\label{834ufn3kl4f}
\zeta\big(s,a+\tfrac12\big) &&  =\,\frac{\,\pi\,}{\,2(s-1)\,} \! \int_{0}^{\infty}
\! \frac{\,\big(a + ix\big)^{1-s} + \big(a - ix\big)^{1-s} \,}{\,\ch^2 \! \pi x \,}\,dx \\[1mm]
&& 
=\,\frac{\,\pi\,}{\,s-1\,} \!\! \int_0^{\infty}\!\!
\! \frac{\, \cos\!\Big[(s-1)\arctg\tfrac{x}{a}\Big] \,}{\,\big(a^2 + x^2\big)^{\frac12(s-1)} \ch^2 \! \pi x \,}\,dx \,,\qquad \Re(a) >0 \,.
\end{eqnarray}
Setting $\,a= v-\tfrac12\,$ in our formulas~for $\zeta\big(s,a+\tfrac12\big)$,
we immediately retrieve our \eqref{9823ydhdhs7}--\eqref{erokvrekov}.
From the principle of analytic continuation it also follows that above integral formulas~are valid for all complex $s\neq1$
and $\Re(a) > 0$ (because of the branch point which should not lie inside the integration contour). 
Note that at $v=1$ our formulas \eqref{9823ydhdhs7}--\eqref{erokvrekov}
reduce to Jensen's formulas for the $\zeta$ function \cite[Eqs.~(88)]{blagouchine2015theorem}.

Now, in order to get the corresponding formulas~for the generalized Stieltjes constant $\gamma_n(v)$ we proceed as follows.
The function $\,(s-1)\zeta\big(s,a+\frac12\big)\,$ is holomorphic on the entire complex $s$--plane,
and hence, may be expanded into a Taylor series. The latter expansion about $s=1$ reads
\begin{eqnarray}\notag
(s-1)\zeta\big(s,a+\tfrac12\big)\,=\,1 
+ \sum_{n=0}^\infty\frac{(-1)^n\gamma_n\big(a+\tfrac12\big)
}{n!}\,(s-1)^{n+1} \,,\quad s\in\mathbb{C}\setminus\!\{1\}.
\end{eqnarray}
But $\,(s-1)\zeta\big(s,a+\frac12\big)\,$ also admits integral representations \eqref{ijh29h3dd2} and \eqref{834ufn3kl4f}. 
Expanding them into the Taylor series in a neighborhood of $s=1$ and equating coefficients in $(s-1)^{n+1}$ 
produces formulas~\eqref{9834df2b3e}--\eqref{9834df2b3e2}. As a particular case of these formulas
we obtain formula (5) from \cite{blagouchine2015theorem} when $v=1$. 
\end{proof}

\begin{corollary}\label{col1}
For complex $v$ and $s$ such that $\Re(v) > \frac12$ and $s\neq1$, 
the Hurwitz zeta function $ \zeta(s,v)$
and the generalized Stieltjes constants $\gamma_n(v)$ admit the representations
\begin{eqnarray}
\zeta(s,v)  &&  =\, \frac{\big(v-\frac12\big)^{1-s}}{\,s-1\,} \, + i \!\int_{0}^{\infty} \!
\frac{\big(v-\tfrac12 - ix\big)^{-s} - \big(v-\tfrac12 + ix\big)^{-s}} {\, e^{2\pi x}+1\,}\, {dx} \,\label{erokvrekov22a}\\[1mm]
&& =\, \frac{\big(v-\frac12\big)^{1-s}}{\,s-1\,} \, - \,2\! \int_{0}^{\infty}\!\!
\! \frac{\, \sin\!\Big[s\arctg\tfrac{2x}{2v-1}\Big] \,}{\,\big(v^2 - v + \tfrac14+ x^2\big)^{\frac{s}{2}} \,}\cdot
\frac{dx}{\, e^{2\pi x}+1 \,} \label{erokvrekov22b}
\end{eqnarray}
and
\be\label{vlkkijvik}
\begin{array}{rl}
\gamma_n(v)\, &\displaystyle  =\, -\frac{\ln^{n+1}\!\big(v-\frac12\big)}{\,n+1\,}\,+  \\[6mm]
&\displaystyle\qquad 
+\, i\!\int_{0}^{\infty} \!
\left\{ \frac{\,\ln^{n}\!\big(v-\tfrac12 - ix\big)} {\, v-\tfrac12 - ix \,} - \frac{\,\ln^{n}\!\big(v-\tfrac12 + ix\big)} 
{\, v-\tfrac12 + ix \,}\,  \right\}
\frac{dx}{\, e^{2\pi x}+1 \,} 
\end{array}
\ee
respectively.
\end{corollary}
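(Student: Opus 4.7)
The plan is to derive Corollary~\ref{col1} directly from Theorem~\ref{thm:integrals} by integration by parts, rather than repeating a contour argument. The key observation is that $\sech^2(\pi x)$ has a convenient antiderivative whose boundary behavior pairs naturally with the Fermi-Dirac kernel $1/(e^{2\pi x}+1)$. Specifically, for $x \ge 0$ the function
\[
U(x) \,=\, -\frac{2}{\,\pi(e^{2\pi x}+1)\,}
\]
satisfies $U'(x) = \sech^2(\pi x)$, with $U(0) = -1/\pi$ and $U(+\infty) = 0$; and by the symmetry $\sech^2(\pi x)$ even, an analogous antiderivative vanishing at $-\infty$ handles the negative half-axis.

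First I would start from representation \eqref{9823ydhdhs7} with the lower sign, split the integral as $\int_{-\infty}^{+\infty} = \int_{-\infty}^0 + \int_0^{+\infty}$, and reduce the negative half by the substitution $x\mapsto -x$ so that both halves are integrals over $[0,\infty)$, involving $(v-\tfrac12\pm ix)^{1-s}$. Integration by parts against $U$ on each piece turns the boundary term at $x=0$ into $(v-\tfrac12)^{1-s}/\pi$ (contributing twice, once per half), while the boundary term at infinity vanishes provided $\Re(s) > 0$ so that $(v-\tfrac12\mp ix)^{1-s}U(x)\to 0$ exponentially. Collecting the two pieces with the prefactor $\pi/[2(s-1)]$, the boundary contributions assemble into $(v-\tfrac12)^{1-s}/(s-1)$ and the interior terms, after using $\tfrac{d}{dx}(v-\tfrac12\mp ix)^{1-s} = \pm i(s-1)(v-\tfrac12\mp ix)^{-s}$, combine into the Abel--Plana-like integral in \eqref{erokvrekov22a}. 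Analytic continuation in $s$ then removes the temporary restriction $\Re(s)>0$, yielding \eqref{erokvrekov22a} for all $s\neq 1$ with $\Re(v)>\tfrac12$.

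Formula \eqref{erokvrekov22b} is then purely algebraic: with $a = v-\tfrac12$ (so that $\Re(a)>0$), the polar form $a\mp ix = \sqrt{a^2+x^2}\,e^{\mp i\arctg(x/a)}$ gives
\[
i\big[(a-ix)^{-s}-(a+ix)^{-s}\big] \,=\, -\frac{\,2\sin\!\big[s\arctg(x/a)\big]\,}{(a^2+x^2)^{s/2}},
\]
and substituting $a = v-\tfrac12$ (so $a^2 = v^2-v+\tfrac14$ and $x/a = 2x/(2v-1)$) turns \eqref{erokvrekov22a} into \eqref{erokvrekov22b}.

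Finally, \eqref{vlkkijvik} follows by the same Taylor-expansion device used at the end of the proof of Theorem~\ref{thm:integrals}. Multiplying \eqref{erokvrekov22a} by $s-1$ gives an entire function of $s$; expand $(v-\tfrac12)^{1-s}$ and $(v-\tfrac12\mp ix)^{-s}$ in powers of $(s-1)$ using $w^{-(s-1)} = \sum_{k\ge 0} (-1)^k \log^k(w)\,(s-1)^k/k!$, and identify the coefficient of $(s-1)^{n+1}$ with $(-1)^n\gamma_n(v)/n!$ from \eqref{eq:defstieltjes}. The constant and $\log^{n+1}(v-\tfrac12)$ contributions from the $(v-\tfrac12)^{1-s}/(s-1)$ term reproduce the boundary piece in \eqref{vlkkijvik}, while the integral contributes the logarithmic integrand. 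The main technical point in the whole argument is controlling the boundary terms in the integration by parts and justifying term-by-term differentiation under the integral sign for the Taylor expansion; both reduce to the exponential decay of $U(x)$ and of the integrand in $x$, which is uniform on compact $s$-sets, so standard dominated-convergence arguments suffice.
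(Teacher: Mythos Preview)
Your proposal is correct and is essentially the paper's own argument. The paper packages the integration by parts as the identity
\[
\int_0^\infty \frac{f(x)}{\cosh^2\pi x}\,dx \;=\; \frac{f(0)}{\pi} + \frac{2}{\pi}\int_0^\infty \frac{f'(x)}{e^{2\pi x}+1}\,dx,
\]
which is exactly your computation with the antiderivative $U(x)=-2/[\pi(e^{2\pi x}+1)]$, and then applies it with $f(x)=(a+ix)^{1-s}+(a-ix)^{1-s}$; the algebraic passage to \eqref{erokvrekov22b} is identical. The only (minor) difference is in the last step: the paper obtains \eqref{vlkkijvik} by applying the same integration-by-parts identity directly to the Stieltjes-constant integral \eqref{9834df2b3e2}, whereas you Taylor-expand \eqref{erokvrekov22a} in $s-1$; both routes are equally short and valid. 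Your temporary restriction $\Re(s)>0$ for the boundary term at infinity is in fact unnecessary, since $(v-\tfrac12\mp ix)^{1-s}$ grows only polynomially while $U(x)$ decays like $e^{-2\pi x}$, but this is harmless.
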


\begin{proof}
Let $f(x)$ be such that $f(x)=o(e^{2\pi x})$ as $x\to\infty$. Then, by integration by parts one has
\be\label{87tv987tvct}
\int_0^\infty \!\frac{f(x)}{\, \ch^2\! \pi x \,}\,dx \, =\,\frac{\,f(0)\,}{\pi} +
\frac{\,2\,}{\pi}\!\int_0^\infty \frac{f'(x)}{e^{2\pi x}+1}\,dx\,,
\ee
provided the convergence of both integrals and the existence of $f(0)$. 
Putting $f(x)=\big(a + ix\big)^{1-s} + \big(a - ix\big)^{1-s}$ straightforwardly yields \eqref{erokvrekov22a}.
By virtue of
\be
\big(a + ix\big)^{s} - \big(a - ix\big)^{s} \,=\,2i \big(a^2 + x^2\big)^{\frac{s}{2}}\sin\!\left(\!s\arctg\frac{x}{a}\right),
\ee
we also obtain \eqref{erokvrekov22b}. We remark that at $v=1$ formulas \eqref{erokvrekov22a}--\eqref{erokvrekov22b}
reduce to yet another formula of Jensen for the $\zeta$ function \cite[Eqs.~(88)]{blagouchine2015theorem}
and its differentiated form.
Formula \eqref{vlkkijvik} is obtained analogously from integral \eqref{9834df2b3e2}.
\end{proof}

\begin{remark}\label{y78v98j2}
For real $v> \frac12$, our formulas for the Stieltjes constants may be simplified to
\be\label{oqjhif20hj3e}
\gamma_n(v)\, =\, -\frac{\pi}{\,n+1\,} \Re \left\{\int_{0}^{\infty} 
\!\frac{\,\ln^{n+1}\!\big(v-\frac12\pm ix\big)\,}{\ch^2 \! \pi x}\,dx \right\}
\ee
and to
\begin{eqnarray}
\gamma_n(v)&& =\,-\frac{\ln^{n+1}\!\big(v-\frac12\big)}{\,n+1\,}\,
\pm\, 2\Im \left\{ \int_{0}^{\infty} \!
\frac{\,\ln^{n}\!\big(v-\tfrac12 \pm ix\big)} {\, v-\tfrac12 \pm ix \,} \cdot
\frac{dx}{\, e^{2\pi x}+1 \,}  \right\}\label{vlkkijvik3}
\end{eqnarray}
respectively.
\end{remark}

\begin{remark}\label{kj9j4dk324fr}
Using similar techniques one may obtain many other integral formulas~with kernels decreasing exponentially fast,
for instance:
\begin{eqnarray}
\zeta(s)  &&  
=\, \frac{1}{\,1-2^{s-1}\,}\!\left\{\frac{1}{2} + \frac{1}{\,2i\,}\!\!\int_{-\infty}^{+\infty} \!\!
\big(1 - ix\big)^{-s}\frac{dx }{\,\sh\pi x\,} \right\} = \\[2mm]
&&\qquad\qquad\qquad
=\, \frac{1}{\,1-2^{s-1}\,}\!\left\{\frac{1}{2} \, +\int_{0}^{\infty} \!\!
\frac{\sin\big(s\arctg x \big)}{\,(1+x^2)^{\frac{s}{2}}\sh\pi x\,} \, dx \right\} \,,
\end{eqnarray}

\begin{eqnarray}
\zeta(s,v)  &&  
=\, \pm\frac{i\, \pi^2}{\,(s-1)(s-2)\,}\!\int_{-\infty}^{+\infty} \!\!
\big(v-\tfrac12\pm ix\big)^{2-s} \frac{\sinh\pi x}{\,\ch^3 \! \pi x\,} \, dx \,=  \\[2mm]
&&\quad
=\, \frac{2\, \pi^2}{\,(s-1)(s-2)\,}\!\int_{0}^{\infty} \!\!
\frac{\, \sin\!\Big[(s-2)\arctg\tfrac{2x}{2v-1}\Big] \,}{\,\big(v^2 - v + \tfrac14+ x^2\big)^{\frac{s}{2}-1} \,}\cdot
\frac{\sinh\pi x}{\,\ch^3 \! \pi x\,} \, dx\,,
\end{eqnarray}

\be
\zeta(s,v)  \, =\, \frac{3\pi^3}{\,(s-1)(s-2)(s-3)\,}\!\int_{-\infty}^{+\infty} \!\!
\left\{\!\frac{1}{\,\ch^4 \! \pi x\,} - \frac{2}{\,3\ch^2 \! \pi x\,}\right\} \frac{dx}{\big(v-\frac12\pm ix\big)^{s-3}} \,,
\ee

\be
\gamma_1 \, =\, \frac{\pi^2}{24} - \frac{\gamma^2}{2} -  \frac{\ln^2\! 2}{2} +  \frac{\ln^2\! \pi}{2}  - \ln2\cdot\ln\pi + \int_{0}^{\infty} \!
\frac{\,\arctg x \cdot \ln(1+x^2)\,}{\,\sh\pi x\,} \, dx\,,
\ee

\begin{eqnarray}
&& \gamma_n(v) \, =\, \frac{(-1)^{n+1}}{4\pi}\Big\{n(n-1)\zeta^{(n-2)}(3,v) + 3n\zeta^{(n-1)}(3,v) + 2\zeta^{(n-2)}(3,v)\!\Big\} - \notag\\[2mm]
&& \qquad\qquad\qquad\qquad\qquad
-\frac{3\pi}{\,4(n+1)\,}\!\int_{-\infty}^{+\infty} \!\frac{\,\ln^{n+1}\!\big(v-\frac12\pm ix\big)\,}{\ch^4 \! \pi x}\,dx\,,
\end{eqnarray}
where the latter formulas hold for $\Re (v) > \frac12$ and $n=2,3,4,\ldots$ For the $n=1$ case, one should remove the $n(n-1)\zeta^{(n-2)}(3,v)$
term from the last formula. The previous formulas for $\zeta(s,v)$ and $\gamma_n(v)$ also give rise to corresponding expressions 
for $\Psi(v)$ and $\ln\Gamma(v)$. For example, 
\begin{eqnarray}
&& \Psi(v) \,=\,-\frac{\Psi_2(v)}{4\pi^2} + \frac{3\pi}{4}\!   
\int_{0}^{\infty} \!\frac{\,\ln\big(v^2 - v + \tfrac14+ x^2\big)\,}{\ch^4 \! \pi x}\,dx\,, \notag\\[2mm]
&& \ln\Gamma(v) \,=\, \frac12\ln2\pi + \left(\!v-\frac12\right)\!\Big(\Psi(v)-1\Big) - \pi\!   
\int_{0}^{\infty} \!\frac{\,x\arctg\tfrac{2x}{2v-1}\,}{\ch^2 \! \pi x}\,dx \,, \notag \\[2mm]
&& \ln\Gamma(v) \,=\, \frac12\ln2\pi + \left(\!v-\frac12\right)\!\left(\Psi(v)+\frac{\Psi_2(v)}{4\pi^2}-1\!\right) - \notag\\[1mm]
&& \qquad\qquad\qquad\qquad\qquad\qquad\qquad\quad - \frac{\Psi_1(v)}{4\pi^2} - \frac{\,3\pi\,}{2}\!\!   
\int_{0}^{\infty} \!\frac{\,x\arctg\tfrac{2x}{2v-1}\,}{\ch^4 \! \pi x}\,dx  \,,\notag
\end{eqnarray}
where $\Psi_1(v)$ and $\Psi_2(v)$ are the trigamma and tetragamma functions respectively.\footnote{Some other integral representations
with the kernels decreasing exponentially fast for $\ln\Gamma(v)$ and the polygamma functions
may also be found in \cite{blagouchine2014malmsten} and \cite{blagouchine2015theorem}.  Also, various relationships
between $\ln\Gamma(z)$ and the polygamma functions are given and discussed in \cite{blagouchine2014malmsten}, 
\cite{blagouchine2015theorem} and \cite{blagouchine2018serhasse}.}

It is similarly possible to derive integral representations
for the Lerch transcendent
$\Phi(z,s,v)=\sum_{n=0}^\infty z^n (n+v)^{-s}$, for example
\begin{eqnarray}
&&\Phi(z,s,v) \,  =\, \frac{\,v^{-s}}{2}\, +\frac{\, i\,}{\,2\,} \! \int_{0}^{\infty}
\! \frac{\,(-z)^{ix}\big(v + ix\big)^{-s} -\, (-z)^{-ix}\big(v - ix\big)^{-s} \,}{\,\sh \pi x \,}\,dx \notag\\[1mm]
&& = \, \frac{\,v^{-s}}{2}\, + \int_{0}^{\infty}
\! \frac{\,\cos(x\ln z)\sin\!\Big[s\arctg\tfrac{x}{v}\Big] - \sin(x\ln z)\cos\!\Big[s\arctg\tfrac{x}{v}\Big] \,}
{\,\big(v^2 + x^2\big)^{\frac{s}{2}} \th\pi x \,}\,dx, \notag
\end{eqnarray}
valid for $z>0$, or
\begin{eqnarray}
\zeta(s,v) &&  =\, \frac{\,v^{-s}}{2}\, +\frac{\, i\,}{\,2\,} \! \int_{0}^{\infty}
\! \frac{\,e^{-\pi x}\big(v + ix\big)^{-s} -\, e^{+\pi x}\big(v - ix\big)^{-s} \,}{\,\sh \pi x \,}\,dx \notag\\[1mm]
&&  =\, \frac{\,v^{-s}}{2}\, +\frac{\, i\,}{\,2\,} \! \int_{0}^{\infty}
\! \frac{\,\big(v + ix\big)^{-s} -\,\big(v - ix\big)^{-s} \,}{\,\th \pi x \,}\,dx \notag\\[1mm]
&& = \, \frac{\,v^{-s}}{2}\, + \int_{0}^{\infty}
\! \frac{\,\sin\!\Big[s\arctg\tfrac{x}{v}\Big] \,}
{\,\big(v^2 + x^2\big)^{\frac{s}{2}} \th\pi x \,}\,dx \,,\notag
\end{eqnarray}
whose integrands are not of exponential decay, despite the presence of the hyperbolic cosecant.\footnote{Note that the
second form of these expressions is obtained from the former one by a trivial simplification. Moreover, if we remark 
that $\cth \pi x=1+ 2 \big(e^{2\pi x}-1\big)^{-1}$, we readily notice the relationship between these integrals and the Hermite and Jensen formulas
for the $\zeta$ functions.} 
At the same time, the above formula for $\Phi(z,s,v)$ is suitable for $z>0$, while the same formula for the negative first
argument reads
\begin{eqnarray}
&&\Phi(-z,s,v) \,  =\, \frac{\,v^{-s}}{2}\, +\frac{\, i\,}{\,2\,} \! \int_{0}^{\infty}
\! \frac{\,z^{ix}\big(v + ix\big)^{-s} -\, z^{-ix}\big(v - ix\big)^{-s} \,}{\,\sh \pi x \,}\,dx \notag\\[1mm]
&& = \, \frac{\,v^{-s}}{2}\, + \int_{0}^{\infty}
\! \frac{\,\cos(x\ln z)\sin\!\Big[s\arctg\tfrac{x}{v}\Big] - \sin(x\ln z)\cos\!\Big[s\arctg\tfrac{x}{v}\Big] \,}
{\,\big(v^2 + x^2\big)^{\frac{s}{2}} \sh\pi x \,}\,dx, \notag
\end{eqnarray}
$z>0$, 
and the integrand decreases exponentially fast. 
\end{remark}

\section{Computation of $\gamma_n(v)$ by integration}

\label{sect:computation}
For the computation of $\gamma_n(v)$, we use formulas \eqref{9834df2b3e}--\eqref{9834df2b3e2}, 
\eqref{oqjhif20hj3e}.\footnote{Note that 
formulas \eqref{vlkkijvik}, \eqref{vlkkijvik3} can also provide good computational results.}
For the purpose of brevity throughout this section, we write $a$ for $v - \frac{1}{2}$.
We denote the integrand (with~$a$ and $n$ as implicit parameters) and the half-line integral by
\begin{equation}
\label{eq:f}
f(z) \,\equiv\, \frac{\log^{n+1}(a+iz)}{\cosh^2 \! \pi z }\,, \qquad I_n(a) \,\equiv\, \int_0^{\infty} \!\! f(x) \, dx
\end{equation}
respectively.
After applying \eqref{eq:recurrence} as needed to ensure $\Re(a) > 0$
(or better, $\Re(a) \ge \frac{1}{2}$ to
stay some distance away from the logarithmic branch point
and avoid convergence issues during the
numerical integration to follow),
we may compute
\begin{equation}
\label{eq:gammafromi}
\gamma_n(v) = 
-\frac{\pi}{(n+1)} \cdot
\begin{cases}
2 \Re(I_n(a)), & \Im(a) = 0  \\[2mm]
I_n(a) + \overline{I_n(\overline{a})}, & \Im(a) \ne 0.  
\end{cases}
\end{equation}
where ``$\,\overline{\phantom{m}}\,$'' stands for the complex conjugate.

For a given accuracy goal of $p$ bits, we aim to compute $I_n(a)$ with a relative
error less than $2^{-p}$.
More precisely, we assume use of ball arithmetic~\cite{vdhoeven2009ball},
and we aim to compute an enclosure with relative radius less than $2^{-p}$.
A first important observation is that the computations must
be done with a working precision of about $p + \log_2 n$ bits
for $p$-bit accuracy, due to the sensitivity of the integrand.
In other words, we lose about $\log_2 n$ bits to the exponents
of the floating-point numbers when evaluating exponentials.
Heuristically, a few more guard bits in addition to this will be sufficient
to account for all rounding errors, and the computed ball provides
a final certificate.

A technical point is that we cannot make any a priori statements
about the relative error of $\gamma_n(v)$
since we do not have lower bounds for $|\gamma_n(v)|$.
Cancellation is possible in the final addition (or extraction of the real part) in~\eqref{eq:gammafromi}.
This should roughly correspond
to multiplying by the cosine factor in~\eqref{eq:knessl}; it is reasonable
to set the accuracy goal with respect to the nonoscillatory factor
$B e^{nA} / \sqrt{n}$.

We primarily have in mind ``small'' parameters $v$ (for example $v = 1$)
such that $|v| \ll n$ if $n$ is large.
The algorithm works for any
complex~$v$ where $\gamma_n(v)$ is defined,
but we do not specifically
address optimization for large $|v|$ which therefore
may result in deteriorating efficiency and
less precise output enclosures.

\subsection{Estimation of the tail}

We approximate $I_n(a)$, given by \eqref{eq:f}, by the truncated integral $\int_0^N f(x) \, dx$ for some $N > 0$.
The following theorem provides an upper bound for the tail $T_N$.

\begin{theorem}
Let 
\be
T_N \equiv \int_N^{\infty} \!\frac{\,\log^{n+1}(a+ix)\,}{\cosh^2 \! \pi x } \,  dx 
\ee
and assume $\Re(a) > 0$.
Then, the following bound holds:
\begin{equation}\label{89rh2ioe8}
|T_N| \, < \, 0.934 \,e^{-2\pi N} \big|\log(a+Ni)\big|^{n+1}\,,
\qquad N \ge n + 2 + |\Im(a)|\,.
\end{equation}
\end{theorem}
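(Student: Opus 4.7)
My plan is to combine a simple decay bound for $\cosh^{-2}(\pi x)$ with a careful growth estimate for the factor $|\log(a+ix)|^{n+1}$, and then integrate. The starting point is the elementary inequality $\cosh(\pi x) \ge \tfrac12 e^{\pi x}$ for $x \ge 0$, which gives $\cosh^{-2}(\pi x) \le 4 e^{-2\pi x}$ and reduces the task to bounding $\int_N^\infty 4|\log(a+ix)|^{n+1} e^{-2\pi x}\,dx$.

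To handle the logarithmic factor, I would use $\tfrac{d}{dx}\log(a+ix) = i/(a+ix)$ together with the triangle inequality to obtain
\[
|\log(a+ix)| \le L + \int_N^x \frac{dt}{|a+it|}, \qquad L := |\log(a+iN)|.
\]
Since $|a+it| \ge t-|\Im(a)|$ whenever $t \ge |\Im(a)|$ (which holds on the range of integration by the hypothesis), the remaining integral is bounded by $\log\!\bigl((x-|\Im(a)|)/(N-|\Im(a)|)\bigr)$. Writing $u = x - N$ and $c := N - |\Im(a)| \ge n+2$, this becomes $|\log(a+ix)| \le L + \log(1+u/c)$. Moreover, $|a+iN|\ge c \ge n+2 \ge 2$ gives $L \ge \log(n+2) \ge \log 2$, which will be critical for bounding the quotient $\log(1+u/c)/L$.

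The next step is to factor out $L^{n+1}$ and apply the elementary estimates $\log(1+y) \le y$ and $(1+y)^{n+1} \le e^{(n+1)y}$ for $y \ge 0$:
\[
|\log(a+ix)|^{n+1} \le L^{n+1}\Bigl(1 + \tfrac{\log(1+u/c)}{L}\Bigr)^{\!n+1} \le L^{n+1} e^{\lambda u}, \qquad \lambda := \frac{n+1}{cL}\le\frac{n+1}{(n+2)\log(n+2)}.
\]
A short monotonicity check shows the rightmost bound on $\lambda$ is decreasing in $n$ and attains its maximum $1/(2\log 2) \approx 0.72$ at $n=0$, so $\lambda < 2\pi$ uniformly, and
\[
|T_N| \le 4 L^{n+1} e^{-2\pi N}\!\int_0^\infty e^{(\lambda-2\pi)u}\,du = \frac{4\, L^{n+1} e^{-2\pi N}}{2\pi - \lambda} \le \frac{4}{\,2\pi - 1/(2\log 2)\,} L^{n+1} e^{-2\pi N},
\]
which sits comfortably below the stated constant $0.934$.

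The main obstacle, I expect, will be calibrating the hypothesis $N \ge n+2 + |\Im(a)|$: the quantity $c \ge n+2$ must be just large enough that $L \ge \log c$ dominates $\log(1+u/c)$ under exponentiation to the $(n+1)$-th power (so the growth factor $(1+\log(1+u/c)/L)^{n+1}$ can be absorbed into an exponential of rate $\lambda < 2\pi$), yet not so large that the resulting constraint becomes impractical. Once this balance is in place, everything else is a short calculation.
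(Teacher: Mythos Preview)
Your proof is correct and follows essentially the same route as the paper: both bound $\cosh^{-2}(\pi x)\le 4e^{-2\pi x}$, use $|\log'(z)|=1/|z|$ together with the hypothesis $N-|\Im(a)|\ge n+2$ to show $|\log(a+ix)|^{n+1}\le L^{n+1}e^{\lambda u}$ with some rate $\lambda<2\pi$, and then integrate. The only substantive difference is that the paper settles for the cruder bound $\lambda\le 2$, giving the constant $2/(\pi-1)\approx 0.934$, while your sharper choice $\lambda\le 1/(2\log 2)$ yields $4/(2\pi-1/(2\log 2))\approx 0.719$, comfortably inside the stated bound.
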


\begin{proof}
For $x \ge 0$, using $|\log'(z)| = 1/|z|$ and the assumptions on $a$ and $N$ gives
\begin{align*}
\big|\log(a + i (N+x))\big|^{n+1}&  = \big|\log(a + Ni)\big|^{n+1} \left| 1 + \frac{\log(a + i (N+x)) - \log(a + Ni)}{\log(a + Ni)} \right|^{n+1} \\
  &  \le \big|\log(a + Ni)\big|^{n+1} \left(1 + \frac{x}{\big|a + Ni\big| \, \big|\log(a + Ni)\big|} \right)^{n+1} \\
  & \le |\log(a + Ni)|^{n+1} \,\, \exp\!\left(\frac{(n+1) x}{|a + Ni| \, \log|a + Ni|}\right) \\
  & \le |\log(a + Ni)|^{n+1} \, \, \exp(2x).
\end{align*}
Since $\sech^2(x) < 4 e^{-2x}$ on the whole real line, we have
\begin{align*}
|T_N| & \le \int_N^{\infty} 4 e^{-2 \pi x} \left| \log\left(a+ix\right) \right|^{n+1} \,dx \\
& \le 4 e^{-2\pi N} |\log(a + Ni)|^{n+1} \int_0^{\infty} \exp\left(-2 \pi x + 2 x\right) \,dx
\end{align*}
and the last integral equals $\frac{1}{2}(\pi-1)^{-1}.$
\end{proof}

We can select $N$ by starting with $N = n+2+|\Im(a)|$ and repeatedly doubling
$N$ until $|T_N| \le 2^{-p-20}$, say.
This bound does not need to be tight since the integration algorithm,
described later,
discards negligible segments cheaply through bisection.

\begin{remark}
The bound in the previous theorem can be made slightly sharper,
although this does not matter for the algorithm. By using the same line of reasoning as above,
the inequality $\big|\ln(1+z)\big| < \sqrt{|z|}\,$ holding true 
for $\Re(z) \ge 0$ and the fact that the error function is always lesser than 1, one can obtain,
for example,
\be\notag
|T_N| \, < \, 0.637 \Big(1+\tfrac{\lambda}{\sqrt{2}}\, e^{\frac{\lambda^2}{8\pi}} \Big)
e^{-2\pi N} \big|\log(a+Ni)\big|^{n+1}\,, \qquad
N\ge \frac{4(n+1)^2}{\lambda^2}+  |\Im(a)|\,,
\ee
where $\lambda$ is some positive parameter lesser than 2 (the smaller $\lambda$, the better this estimation; 
for $\lambda<0.649$ this estimation outperforms \eqref{89rh2ioe8},
but $N$ must be large with respect to $n^2$). Moreover, for $0<\lambda\ll1$ we may neglect
the term $O(\lambda)$ between the parenthesis, and hence obtain
\be\notag
|T_N| \, \lessapprox \, 0.637\,e^{-2\pi N}\big|\log(a+Ni)\big|^{n+1},
\quad N \gg n^2\,.
\ee
Both these bounds and the value $0.637$, coming from $2/\pi$, are in good agreement with the numerical results.
Note that if \eqref{89rh2ioe8} is suitable for cases in which $N$ is comparable to $n$,
the above estimations are suitable only for cases of large and extra-large $N$ with respect to $n^2$.
\end{remark}

\subsection{Cancellation-avoiding contour}

For small $n$, the integral $\int_0^N f(x) \, dx$ can be computed
directly.
For large $n$, the integrand oscillates on the real line and a higher
working precision must be used due to cancellation.
At least for $v = 1$,
the amount of cancellation can be calculated accurately by
numerically computing the maximum value of $|f(x)|$ on $0 \le x < \infty$
and comparing this magnitude to the asymptotic formula \eqref{eq:knessl}.
For example, we need about 30 extra bits when $n = 10^3$,
1740 bits when $n = 10^6$, and $4 \cdot 10^5$ bits when $n = 10^9$.

For $n$ larger than about $10^3$, we shift the path to eliminate the
cancellation problem. The integrand can be written as
\begin{equation}
\label{eq:fgh1}
f(z) = \exp\left(g(z)\right) h(z),
\end{equation}
where
\begin{equation}
\label{eq:fgh2}
g(z) = (n+1) \log\left(\log\left(a + iz\right)\right) - 2 \pi z, \quad h(z) = (1 + \tanh\pi z)^2.
\end{equation}

Assuming that $n \gg |a|$, the function $\exp(g(z))$ has a single
saddle point in the right half-plane.
The saddle point equation $g'(\omega) = 0$ can be reduced to
\begin{equation}
(n+1) + 2 \pi i \left(a + i \omega \right) \log\left(a + i \omega \right) = 0
\end{equation}
which admits the closed-form solution
\begin{equation}
\omega = i \left( a - \frac{u}{W_0(u)}\right), \quad u = \frac{(n+1) i}{2 \pi}.
\end{equation}
where $W_0(u)$ is the principal branch of the
Lambert $W$ function. Only the principal branch works,
a fact which is not obvious from the symbolic form of the solution
but which can be checked numerically.

We can now integrate along four segments
$$\int_0^N f(x) \, dx \,=\, \int_0^M f(z) \, dz + \int_M^{M+Ci} f(z) \, dz + \int_{M+Ci}^{N+Ci} f(z) \, dz + \int_{N+Ci}^N f(z) \, dz $$
with the choice of vertical offset $C = \operatorname{Im}(\omega)$ to
(approximately) minimize the peak magnitude of $f(t+Ci)$ on $M \le t \le N$.
The left point $M > 0$ just serves to avoid the poles of
the integrand on the imaginary axis and the nearby vertical branch cut
of the logarithm; we can for instance take $M = 10$.

Numerical tests (compare Fig.~\ref{fig:saddle})
confirm that there is virtually no cancellation
with this contour (again, assuming that $|v|$ is not too large).
The path does not exactly pass through the saddle point of $f(z)$, but since
$h(z)$ is exponentially close to a constant, the
perturbation is negligible.
The deviation
between the straight-line path through the saddle point
and the actual steepest descent contour also has negligible impact
on the numerical stability.

\begin{figure}[!t]    
\centering 
\includegraphics[width=0.9\textwidth]{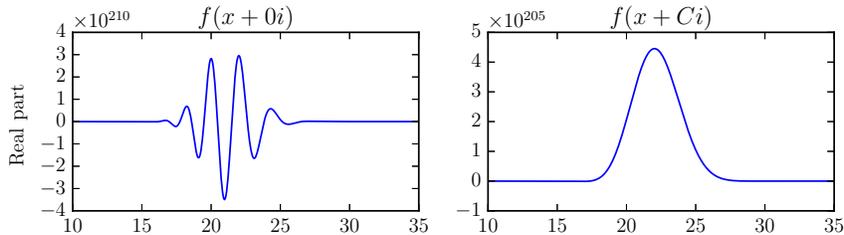} 
\caption{Real part of $f(z)$ for $z = x+0i$ on the real line (left)
and for $z = x+Ci$ passing near the saddle point (right), here with parameters $a = \frac{1}{2}, n = 500$, where integrating along the real line results in about five digits of cancellation.}
\label{fig:saddle} 
\end{figure}

We note that the complex Lambert $W$ function can be computed
with rigorous error bounds~\cite{johansson2017computing}.
However, it is not actually necessary to compute $\omega$ rigorously for
this application since the integration
follows a connected path and ball arithmetic will account for
the actual cancellation; it is sufficient to use a
floating-point approximation for~$\omega$ with heuristic
accuracy of about $\log_2 n$ bits.
For example, an approximation of $\omega$
computed with 53-bit machine arithmetic
is sufficient up to about $n = 10^{15}$.


\subsection{Integration and bounds near the saddle point}

The main task of integrating $f(z)$ along one or four segments in the plane
is not difficult in principle, since $f(z)$ is analytic
(and non-oscillatory) in a neighborhood of each segment.
Constructing a reliable and fast algorithm, in particular
for extremely large $n$, does nevertheless
require some attention to detail.

Gauss-Legendre quadrature is a good option, and was already used
by Ainsworth and Howell~\cite{ainsworth1985}, who,
however, did not prove any error bounds
since ``The integrand is much too complex to use the standard remainder terms''.
To obtain rigorous error bounds and ensure rapid convergence with a manageable
level of manual error analysis, we use the self-validating
Petras algorithm~\cite{petras2002self}
which was recently adapted for arbitrary-precision ball arithmetic
and implemented in the Arb library~\cite{Johansson2018numerical}.

The Petras algorithm combines Gauss-Legendre quadrature
with adaptive bisection.
Given a segment $[\alpha,\beta]$, the algorithm first evaluates the direct
enclosure $(\beta-\alpha) f([\alpha,\beta])$ and uses this if the
error is negligible (which in this application always occurs near the tail ends of the integral
when $p \ll n$).
Otherwise, it bounds
the error of $d$-point quadrature
$$\int_{\alpha}^{\beta} f(z) dz \approx \sum_{k=1}^d w_k f(z_k)$$
in terms of the magnitude
$|f(z)|$ on a Bernstein ellipse $E$ around $[\alpha,\beta]$:
if $f(z)$ is analytic on $E$ with $\max_{z \in E} |f(z)| \le V$,
the error is bounded by $V c / \rho^d$ where $c$ and~$\rho$ only depend
on $E$ and $[\alpha,\beta]$.
If $f(z)$ has poles or branch cuts on $E$
or if the quadrature degree~$d$ determined by this bound
would have to be larger than $O(p)$ to ensure a relative error
smaller than $2^{-p}$, the segment $[\alpha,\beta]$ is bisected
and the same procedure is applied recursively.

The remaining issue is the evaluation of the integrand.
The pointwise evaluations $w_k f(z_k)$ pose no problem:
here we simply use~\eqref{eq:f} directly.
It is slightly more complicated to compute good enclosures for $f(z)$
on wide intervals representing~$z$, which is needed both for
the direct enclosures on subintervals $f([\alpha,\beta])$
and for the bounds on ellipses.\footnote{The complex ball arithmetic in Arb actually uses
rectangles with midpoint-radius real and imaginary parts rather
than complex disks, so ellipses will always be represented by
enclosing rectangles (with up to a factor $\sqrt{2}$ overestimation),
but this detail is immaterial to the principle of the algorithm.}
Bounding the integrand on wide ellipses (or enclosing rectangles) by
evaluating~\eqref{eq:f} or \eqref{eq:fgh1}--\eqref{eq:fgh2}
directly in interval or ball arithmetic
results at best in $n^{1/2+o(1)}$ complexity as $n \to \infty$.\footnote{In fact, the complexity becomes $n^{1+o(1)}$
when using ball arithmetic with a fixed precision for the radii (30 bits in Arb). The $n^{1/2+o(1)}$ estimate holds when the endpoints are tracked accurately.}
The explanation for this phenomenon is
that $f(z)$ is a quotient of two functions $f_1(z) = \log^{n+1}(a+iz)$, $f_2(z) = \cosh^2\!\pi z$ that individually vary
rapidly near the saddle point, i.e.\
\begin{equation}
\label{eq:gheps}
\frac{f_1(z+\varepsilon)}{f_1(z)} \sim \frac{f_2(z+\varepsilon)}{f_2(z)} \sim e^{2 \pi \varepsilon}
\end{equation}
while $f_1 / f_2$ is nearly constant.
Direct evaluation fails to account for this correlation,
which is an example of the dependency problem in interval arithmetic.
Therefore, although $f(z)$ is nearly constant close to the saddle point,
direct upper bounds for $|f(z)|$ are exponentially sensitive
to the width of input intervals, and this forces the integration
algorithm to bisect down to subsegments of width $O(1)$
around the saddle point.
Since the Gaussian peak of the integrand around the saddle point has an effective width of $O(n^{1/2})$,
the integration algorithm has to bisect down to $O(n^{1/2})$ subsegments before converging.

To solve this problem, we compute tighter bounds on wide intervals using the standard trick of
Taylor expanding with respect to a symbolic perturbation $\varepsilon$.

\begin{theorem}
If $z$ is contained in a disk or rectangle $Z$ with midpoint $m$ and radius~$r$,
such that $\Re(Z) \ge 1$, and if $\max_{|u-m| \le r} |g''(u)| \le G$, then
\begin{equation}
\label{eq:fbound}
|f(z)| \; < \;  4.015 \left| \exp\left(g(m)\right) \right| \exp\left(|g'(m)| r + \tfrac{1}{2} G r^2 \right).
\end{equation}
\end{theorem}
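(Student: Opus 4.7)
The plan is to bound the two factors in the factorization $f(z) = \exp(g(z))\,h(z)$ of \eqref{eq:fgh1}--\eqref{eq:fgh2} separately: the constant $4.015$ will come from $h$, while the exponential will come from a Taylor expansion of $g$ about $m$.

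First, I would handle $h(z)=(1+\tanh\pi z)^2$ by rewriting $1+\tanh\pi z = 2/(1+e^{-2\pi z})$, so that
$$h(z) \;=\; \frac{4}{\,(1+e^{-2\pi z})^2\,}.$$
Since $\Re(z) \geq 1$ for every $z \in Z$, one has $|e^{-2\pi z}| \leq e^{-2\pi}$, whence $|1+e^{-2\pi z}| \geq 1 - e^{-2\pi}$ by the reverse triangle inequality. The resulting bound
$$|h(z)| \;\leq\; \frac{4}{\,(1-e^{-2\pi})^2\,} \;<\; 4.015$$
is a one-line numerical check, using $e^{-2\pi} \approx 0.00187$.

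Next, I would control $\exp(g(z))$ via Taylor's formula for $g$ about $m$. The set $Z$ is convex and is contained in the closed disk $\{u : |u-m|\leq r\}$ on which $|g''|\leq G$ by hypothesis; in particular the whole segment from $m$ to $z$ lies in that disk. Taylor's theorem with integral remainder then gives
$$g(z) - g(m) - g'(m)(z-m) \;=\; (z-m)^2 \int_0^1 (1-t)\, g''\bigl(m + t(z-m)\bigr)\, dt,$$
whose modulus is at most $\tfrac12 G r^2$. Combining this with $|g'(m)(z-m)|\leq |g'(m)|\,r$ and passing to the exponential (which loses nothing, since $|\exp(w)| \leq \exp(|w|)$) yields
$$|\exp(g(z))| \;\leq\; |\exp(g(m))|\,\exp\!\bigl(|g'(m)|\,r + \tfrac12 G r^2\bigr).$$

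Multiplying these two bounds produces \eqref{eq:fbound}. I do not foresee any real obstacle: the only non-routine ingredient is the explicit numerical verification that $4/(1-e^{-2\pi})^2 < 4.015$, and the Taylor-remainder argument is entirely standard, relying only on convexity of the region on which $|g''|$ is controlled. The conceptual point of the decomposition—which makes the bound useful in practice—is that $g$ absorbs the rapidly varying behavior of $\log^{n+1}(a+iz)$ and of $\cosh^{-2}\pi z$ that would otherwise defeat direct interval evaluation, while the residual factor $h$ is essentially constant and can be bounded crudely.
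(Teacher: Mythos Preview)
Your proposal is correct and follows essentially the same route as the paper: the decomposition $f=\exp(g)\,h$, the bound $|h(z)|<4.015$ for $\Re(z)\ge 1$, and Taylor's theorem with integral remainder for $g$ are exactly what the paper uses. Your write-up is in fact more explicit than the paper's, which simply asserts the $4.015$ bound on $h$ without the $h(z)=4/(1+e^{-2\pi z})^2$ computation you supply.
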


\begin{proof}
We use the decomposition \eqref{eq:fgh1}--\eqref{eq:fgh2}.
If $\Re(z) \ge 1$, then $|h(z)| < 4.015$.
Taylor's theorem applied to $\exp(g(z))$ gives
\begin{equation}
\exp\left(g(m + \varepsilon)\right) = \exp\left(g(m)\right) \exp\left(g'(m) \varepsilon + \int_0^{\varepsilon} g''(m+t) (\varepsilon-t) dt\right)
\end{equation}
for all $|\varepsilon| \le r$.
\end{proof}

To implement the bound \eqref{eq:fbound}, we compute $g(m)$ and $g'(m)$ in ball arithmetic
(where $m$ is an exact floating-point number),
using the formula
$$g'(m) = \frac{i (n+1)}{(a + i m) \log(a + i m)} - 2 \pi.$$
The behavior near the saddle point is now captured precisely by the
cancellation in $g'(m)$. At least $\log_2 n$ bits of precision must
be used to evaluate $\exp(g(m))$ (to ensure that the magnitude of the integrand
near the peak is approximated accurately) and also to evaluate $g'(m)$
(to ensure that the remainder after the catastrophic cancellation
is evaluated accurately).
Finally, to compute $G$, we evaluate
$$g''(z) = \frac{(n+1) \left(1 + \frac{1}{\log t}\right)}{t^2 \log t}, \quad t = a + i z$$
directly over the complex ball representing $z$. As a minor optimization,
we can compute lower bounds for $|t|$ and $|\log t|$.
This completes the algorithm.

\subsection{Asymptotic complexity}

If the accuracy goal $p$ is fixed (or grows sufficiently slowly compared to $n$),
then we can argue heuristically that the bit complexity of computing $\gamma_n$ to $p$-bit accuracy
with this algorithm is $\smash{\log^{2+o(1)} n}$.
This estimate accounts for the bisection depth around the
saddle point as well as the extra precision
of $\log_2 n$ bits.
The logarithmic complexity agrees well
with the actual timings (presented in the next section).

We stop short of attempting to prove a formal complexity result,
which would require more detailed calculations
and careful accounting for the accuracy of the enclosures in ball arithmetic
as well as details about the integration algorithm.
We have delegated as much work as possible to a general-purpose
integration algorithm in order to minimize the analysis necessary for a complete implementation.
However, in future work, it would be interesting to pursue such analysis not just for
this specific problem, but more generally for evaluating
classes of parametric integrals using the combination of saddle point analysis
and numerical integration.

If we on the other hand fix $n$ and consider varying $p$, then
the asymptotic bit complexity is of course $p^{2+o(1)}$
since Gaussian quadrature uses $O(p)$ evaluations of the integrand
on a fixed segment and $O(\log p)$ segments are sufficient.

\section{Implementation and benchmark results}

\label{sect:benchmark}

The new integration
algorithm has been implemented in Arb~\cite{Johansson2017arb}.\footnote{\url{http://arblib.org/} -- the new code is available in the 2.14-git version.}
The method \texttt{acb\_dirichlet\_stieltjes} computes $\gamma_n(v)$,
given a complex ball representing $v$, an arbitrary-size integer $n$,
and a precision $p$.
The working precision is set automatically so that the
result will be accurate to about $p$ bits, at least when $v = 1$.
This method selects automatically
between two internal methods:
\begin{itemize}
\item \texttt{acb\_dirichlet\_stieltjes\_integral} uses the new integration algorithm.
\item \texttt{acb\_dirichlet\_stieltjes\_em}
is a wrapper around the existing code for computing
the Hurwitz zeta function using Euler-Maclaurin summation~\cite{Johansson2014hurwitz}.
\end{itemize}

For very small $n$, the integration algorithm is one--three orders
of magnitude slower than Euler-Maclaurin summation,
but the cost of the latter increases rapidly with~$n$.
Integration was found to be faster when $n >  \max(100, p / 2)$,
and this automatic cutoff is used in the code.
We remark that the Euler-Maclaurin code actually computes
$\gamma_0(v), \ldots, \gamma_n(v)$ simultaneously and reads off
the last entry.
At this time, we do not have an implementation of the Euler-Maclaurin
formula optimized for a single $\gamma_n(v)$ value,
which would be significantly faster for $n$ from about $10$ to $10^3$.

Table~\ref{tab:timings} shows the time in seconds to evaluate
the ordinary Stieltjes constants $\gamma_n$
to a target accuracy $p$ of 64 bits (about 18~digits), 333 bits (about 100 digits)
and 3333 bits (just more than 1000 digits)
on an Intel Core i5-4300U CPU running 64-bit Ubuntu Linux.
Here we only show the timing results
for the Arb method \texttt{acb\_dirichlet\_stieltjes\_integral},
omitting use of Euler-Maclaurin summation.
The table also shows timings for
Mathematica 11.0.0 for Microsoft Windows (64-bit) on an Intel Core i9-7900X CPU
for comparison.

As expected, the running time of our algorithm only depends weakly on $n$.
The performance is also reasonable for large $p$.
The timings fluctuate slightly rather than increasing monotonically with $n$,
which appears to be an artifact of the local adaptivity of the
integration algorithm.

Mathematica returns incorrect answers for large $n$ when using machine precision.
At higher precision, the performance is consistent up to about $n = 10^4$,
but the running time then starts to increase rapidly. With $n = 10^5$
and 100-digit or 1000-digit precision, Mathematica
did not finish when left to run overnight.

Mathematica uses Keiper's algorithm according to the documentation~\cite{wolfram},
but unfortunately we do not have details about the implementation.
The timings and failures for large $n$ are seemingly consistent with
use of numerical integration in some form
without the precautions we have taken
against oscillation problems.

\begin{table}[t!]
\renewcommand{\arraystretch}{0.95}
\setlength{\tabcolsep}{1em}
\begin{center}
\caption{Time in seconds to compute $\gamma_n$. The left columns show results for $d$ digits in Mathematica using \texttt{N[StieltjesGamma[n],d]}, or \texttt{N[StieltjesGamma[n]]} when $d=16$ giving machine precision. The smallest results are omitted since the timer in Mathematica does not have sufficient resolution. The (wrong) entries signify that Mathematica returns an incorrect result. The (timeout) entries signify that Mathematica had not completed after several hours. The right columns show results for $p$-bit precision with the new integration algorithm in Arb.\label{tab:timings}}
\begin{tabular}{l | l l l | l l r}
    & \multicolumn{3}{ c }{Mathematica} & \multicolumn{3}{ c }{Arb (integration)} \\
$n$ & $d=16$ & $d=100$ & $d=1000$    & $p=64$ & $p=333$ & \!\!\!\!\!\!$p=3333$ \\ \hline
$1$ &          & &       0.16     & 0.0011 & 0.0089 & 2.7 \\
$10$ &         & 0.016 & 0.39    & 0.0020 & 0.032 & 6.6 \\
$10^2$ & 0.016 & 0.16 & 2.7 & 0.0032 & 0.030 & 3.5 \\
$10^3$ & 0.031 & 0.16 & 3.3 & 0.0064 & 0.10 & 7.5 \\
$10^4$ & (wrong) & 0.41 & 4.5 & 0.0043 & 0.045 & 19.8 \\
$10^5$ & (wrong) & (timeout) & (timeout) & 0.0043 & 0.026 & 27.8 \\
$10^6$ & (wrong) & &     & 0.0066 & 0.026 & 18.1 \\
$10^{10}$ & & &     & 0.0087 & 0.031 & 32.6 \\
$10^{15}$ & & &     & 0.014 & 0.061 & 7.0 \\
$10^{30}$ & & &     & 0.087 & 0.22 & 16.7 \\
$10^{60}$ & & &     & 0.26 & 0.86 & 30.9 \\
$10^{100}$ & & &      & 0.76 & 1.5 & 57.2 \\
\end{tabular}
\end{center}
\end{table}

\begin{table}[t!]
\renewcommand{\arraystretch}{0.95}
\setlength{\tabcolsep}{1em}
\begin{center}
\caption{Time in seconds to compute $\gamma_0, \ldots, \gamma_n$ simultaneously.\label{tab:timings2}}
\begin{tabular}{l | l l l | l l l}
    & \multicolumn{3}{ c }{Arb (Euler-Maclaurin)} & \multicolumn{3}{ c }{Arb (integration)} \\
$n$ &      $p = 64$    &  $p = 333$ & $p = 3333$    & $p = 64$     & $p = 333$ & $p = 3333$ \\ \hline
$1$    & 0.000061 & 0.00026     & 0.012        & 0.012  & 0.12    & 18     \\
$10$   & 0.00035  & 0.0016     & 0.060        & 0.025  & 0.20    & 37     \\
$10^2$ & 0.0047   & 0.11     & 0.39      & 0.28   & 1.8    & 370     \\
$10^3$ & 0.69     & 0.87     & 5.5        & 4.3    & 23    & 4527     \\
$10^4$ & 1207     & 1210     & 1626        & 38     & 267  &      
\end{tabular}
\end{center}
\end{table}

We also mention that Maple is much slower than Mathematica,
taking 0.1 seconds to compute $\gamma_{10}$, a minute to compute $\gamma_{1000}$
and six minutes to compute $\gamma_{2000}$ to 10 digits.

\subsection{Multi-evaluation}

Table~\ref{tab:timings2} compares the performance of Euler-Maclaurin summation
and the integration method in Arb
for computing $\gamma_0(v), \ldots, \gamma_n(v)$ simultaneously.
With the integration algorithm, this means making $n+1$ independent
evaluations, while the Euler-Maclaurin algorithm only has to be executed
once. Despite this, integration still wins for sufficiently
large $n$, unless $p$ also is large.

\subsection{Numerical values}

We show the computed values of a few large Stieltjes constants.
The following significands are correctly rounded to 100 digits (with at most
0.5 ulp error):

\vspace{1.5mm}

$\gamma_{10^{5}} \approx
1.9919273063125410956582272431568589205211659777533113258 \\
75975525936171259272227176914320666190965225 \cdot 10^{83432},$

\vspace{1.5mm}

$\gamma_{10^{10}} \approx
7.5883621237131051948224033799125486921750410324509700470 \\
54093338492423974783927914992046654518550779 \cdot 10^{12397849705},
$

\vspace{1.5mm}

$\gamma_{10^{15}} \approx
1.8441017255847322907032695598351364885675746553315587921 \\
86085948502542608627721779023071573732022221 \cdot 10^{1452992510427658},
$

\vspace{1.5mm}

$
\gamma_{10^{100}} \approx
3.1874314187023992799974164699271166513943099108838469225 \\ 07106265983048934155937559668288022632306095 \cdot 10^e,
$

$e = 2346394292277254080949367838399091160903447689869837 \\ 3852057791115792156640521582344171254175433483694.$

\vspace{1.5mm}

As a sanity check, $\gamma_{10^5}$ agrees with the previous record
Euler-Maclaurin computation~\cite{Johansson2014hurwitz}.
The value of $\gamma_n$ also agrees with
the Knessl-Coffey formula~\eqref{eq:knessl} to about $\log_{10} n$ digits,
in perfect agreement with the error term in this asymptotic approximation
being $O(1/n)$.

For $\gamma_n(v)$ with a nonreal $v$, the computation time
roughly doubles since two integrals are computed.
With $v \ne 1$, we can for instance compute:

\vspace{1.5mm}

$\gamma_{10^{5}}(2+3i) \approx
(1.52933142489317896667092453331813941673604063614322663 \\ 9046917471026123822028695414669890818089958104 \; + \; 7.6266053170235392288 \\
29846454534202735013368165330230700751870950104906000791927387438554979 \\ 23063058i) \cdot 10^{83440},$

\vspace{1.5mm}

$\gamma_{10^{100}}(2+3i) \approx
(0.02447197253567132691871635713584630519276677767177878 \\ 733142765829147799303241971747565188937402242864 
\; + \; 1.328114485458616967 \\ 078662312208319540579816973253179511750642930437359777538176731578318799 
\\ 940692883i) \cdot 10^{e+10}.$

These values similarly agree to $\log_{10} n$ digits with the leading-order truncation
of Paris's generalization~\cite{paris2015asymptotic}
of the Knessl-Coffey formula, providing
both a check on our implementation
and an independent validation of Paris's results.

\section{Discussion}

A few possible optimizations of the integration algorithm are worth pointing out.
The adaptive integration strategy in Arb can probably be improved, which should give a constant factor speedup.
The working precision could also likely be reduced by a preliminary rescaling near the saddle point.

For evaluating a range of $\gamma_n(v)$ simultaneously,
one could perform vector-valued integration and recycle
the evaluations of $\log(a+iz)$ and $\cosh^2\!\pi z$.
It would be interesting to compare this approach to simultaneous evaluation with the Euler-Maclaurin formula.

It would also be interesting to investigate use of
double exponential quadrature instead of Gaussian quadrature.

The computational part of this study was done for two purposes: first, to develop
working code for Stieltjes constants as part of
the collection of rigorous special function routines in the Arb library,
and second, to test the integration algorithm~\cite{petras2002self,Johansson2018numerical}
for a family of integrals involving large parameters.
We do not have a concrete application in mind for the code, but
the Stieltjes constants are potentially useful in
various types of analytic computations involving the Riemann zeta function,
and large-$n$ evaluation can be useful for
testing the accuracy of asymptotic formulas for Stieltjes
constants and related quantities.

The technique of evaluating parametric integrals by integrating numerically
along a steepest descent contour is, of course, well
established in the literature on computational methods for special functions,
but such an algorithm has not previously been published for Stieltjes constants.
The use of rigorous integration techniques in such a setting has also
been explored very little in earlier work.
The most important lesson learned here is that the
heavy lifting can be done by the integration algorithm,
requiring only an elementary pen-and-paper analysis of the integrand.
The same technique should be effective for
rigorously computing many other number sequences and
special functions given by similar integral representations.
On that note, it would be interesting to search for more
integral representations similar to those obtained in Section~\ref{sect:integrals}.
Many such representations with the integrands decreasing exponentially
fast for $\ln\Gamma(z)$ and for the polygamma functions
may be found in \cite{blagouchine2014malmsten}
and \cite{blagouchine2015theorem}.

\section*{Acknowledgements}

We thank Jacques G\'{e}linas for pointing out the previous computations in~\cite{ainsworth1985} and
Vladimir Reshetnikov for helping with some numerical verifications, and are especially greateful to Joseph Oesterl\'e for 
sharing many challenging ideas  on the Stieltjes constants during his stay in St.~Petersburg in June 2017.

\bibliographystyle{plain}

\begin{thebibliography}{}

\end{thebibliography}


\begin{thebibliography}{10}

\bibitem{adell2017fast}
J.~Adell and A.~Lekuona.
\newblock Fast computation of the {S}tieltjes constants.
\newblock {\em Mathematics of Computation}, 86(307):2479--2492, 2017.

\bibitem{ainsworth1985}
O.~R. Ainsworth and L.~W. Howell.
\newblock An integral representation of the generalized {E}uler-{M}ascheroni
  constants.
\newblock {\em NASA Technical Paper 2456}, 1985.

\bibitem{berndt1972hurwitz}
B.~C. Berndt.
\newblock On the {H}urwitz zeta-function.
\newblock {\em The Rocky Mountain Journal of Mathematics}, 2(1):151--157, 1972.

\bibitem{blagouchine2014malmsten}
Ia.~V. Blagouchine. 
\newblock Rediscovery of Malmsten's integrals,
  their evaluation by contour integration methods and some related results. 
\newblock {\em Ramanujan Journal}, 35:21--110, 2014. Addendum: 42:777--781, 2017.

\bibitem{blagouchine2015theorem}
Ia.~V. Blagouchine.
\newblock A theorem for the closed-form evaluation of the first generalized
  {S}tieltjes constant at rational arguments and some related summations.
\newblock {\em Journal of Number Theory}, 148:537--592, 2015. Erratum: 151:276--277, 2015.

\bibitem{blagouchine2016}
Ia.~V. Blagouchine.
\newblock Expansions of generalized Euler's
  constants into the series of polynomials in $\pi^{-2}$ and into the formal
  enveloping series with rational coefficients only.
\newblock {\em Journal of Number Theory}, 158:365--396, 2016. Corrigendum: 173:631--632, 2017.

\bibitem{blagouchine2018serhasse}
Ia.~V. Blagouchine.
\newblock Three notes on Ser's and Hasse's representations for the zeta-functions.
\newblock {\em Integers}, 
18A(\#A3):1--45, 2018.

\bibitem{bohman1988stieltjes}
J.~Bohman and C.~E. Fr{\"o}berg.
\newblock The {S}tieltjes function - definition and properties.
\newblock {\em Mathematics of Computation}, 51(183):281--289, 1988.

\bibitem{brent1980some}
R.~P. Brent and E.~M. McMillan.
\newblock Some new algorithms for high-precision computation of {E}uler's
  constant.
\newblock {\em Mathematics of Computation}, 34(149):305--312, 1980.

\bibitem{briggs1955some}
W.~E. Briggs.
\newblock Some constants associated with the {R}iemann zeta-function.
\newblock {\em The Michigan Mathematical Journal}, 3(2):117--121, 1955.

\bibitem{fekih2014new}
L.~Fekih-Ahmed.
\newblock A new effective asymptotic formula for the {S}tieltjes constants.
\newblock {\em arXiv preprint arXiv:1407.5567}, 2014.

\bibitem{gram1895}
J.~P.~Gram.
\newblock Note sur le calcul de la fonction $\zeta(s)$ de Riemann
\newblock {\em Oversigt.~K.~Danske Vidensk.~(Selsk.~Forh.)}, 303--308, 1895.

\bibitem{Johansson2014hurwitz}
F.~Johansson.
\newblock Rigorous high-precision computation of the {H}urwitz zeta function
  and its derivatives.
\newblock {\em Numerical Algorithms}, 69:253--270, 2015.

\bibitem{Johansson2017arb}
F.~Johansson.
\newblock Arb: efficient arbitrary-precision midpoint-radius interval
  arithmetic.
\newblock {\em IEEE Transactions on Computers}, 66:1281--1292, 2017.

\bibitem{johansson2017computing}
F.~Johansson.
\newblock Computing the {L}ambert {W} function in arbitrary-precision complex
  interval arithmetic.
\newblock {\em arXiv preprint arXiv:1705.03266}, 2017.

\bibitem{johansson2017mpmath}
F.~Johansson.
\newblock mpmath: a {P}ython library for arbitrary-precision floating-point
  arithmetic, 2017.
\newblock Version 1.0.

\bibitem{Johansson2018numerical}
F.~Johansson.
\newblock Numerical integration in arbitrary-precision ball arithmetic.
\newblock {\em arXiv preprint arXiv:1802.07942}, 2018.

\bibitem{keiper1992power}
J.~B. Keiper.
\newblock Power series expansions of {R}iemann's $\xi$ function.
\newblock {\em Mathematics of Computation}, 58(198):765--773, 1992.

\bibitem{knessl2011effective}
C.~Knessl and M.~Coffey.
\newblock An effective asymptotic formula for the {S}tieltjes constants.
\newblock {\em Mathematics of Computation}, 80(273):379--386, 2011.

\bibitem{kreminski2003newton}
R.~Kreminski.
\newblock {N}ewton-{C}otes integration for approximating {S}tieltjes
  (generalized {E}uler) constants.
\newblock {\em Mathematics of Computation}, 72(243):1379--1397, 2003.

\bibitem{liang1972stieltjes}
J.~J.~Y. Liang and J.~Todd.
\newblock The {S}tieltjes constants.
\newblock {\em Journal of Research of the National Bureau of Standards},
  76:161--178, 1972.

\bibitem{lindelof1905}
E.~Lindel\"of.
\newblock Le calcul des r\'esidus et ses applications \`a  la th\'eorie des fonctions.
\newblock Gauthier--Villars, 1905.

\bibitem{matsuoka1985}
Y.~Matsuoka.
\newblock Generalized Euler constants associated with the
Riemann zeta function.
\newblock {In ``Number Theory and Combinatorics: Japan 1984 (Jin Akiyama ed.)''.}
World Scientific, Singapore, 279--295, 1985.

\bibitem{matsuoka1989}
Y.~Matsuoka.
\newblock On the power series coefficients of the {R}iemann zeta function.
\newblock {\em Tokyo Journal of Mathematics}, 12(1):49--58, 1989.

\bibitem{paris2015asymptotic}
R.~B. Paris.
\newblock An asymptotic expansion for the {S}tieltjes constants.
\newblock {\em arXiv preprint arXiv:1508.03948}, 2015.

\bibitem{petras2002self}
K.~Petras.
\newblock Self-validating integration and approximation of piecewise analytic
  functions.
\newblock {\em Journal of Computational and Applied Mathematics},
  145(2):345--359, 2002.
  
\bibitem{eddinthese2013}
S.~Saad-Eddin.
\newblock On two problems concerning the Laurent--Stieltjes coefficients of Dirichlet $L$--series (Ph.D.
  thesis).
\newblock University Lille 1, France, 2013.

\bibitem{srivastava_03}
H. M.~Srivastava and J.~Choi.
\newblock Series Associated with the Zeta and Related Functions.
\newblock Kluwer Academic Publishers, the Netherlands, 2001.

\bibitem{srivastava_04}
H. M.~Srivastava and J.~Choi.
\newblock Zeta and $q$--Zeta Functions and Associated Series and Integrals.
\newblock Elsevier, 2012.

\bibitem{wolfram}
Wolfram Research.
\newblock Some notes on internal implementation. {Wolfram Language \& System
  Documentation Center}, 2018.
\newblock
  \url{https://reference.wolfram.com/language/tutorial/SomeNotesOnInternalImplementation.html}.

\bibitem{vdhoeven2009ball}
J.~van~der Hoeven.
\newblock Ball arithmetic.
\newblock Technical report, HAL, 2009.
\newblock hal-00432152.

\end{thebibliography}

\end{document}